\documentclass{amsart}    
\usepackage{amsmath} 
\usepackage{paralist}
\usepackage{cancel} 
\usepackage{graphics} 
\usepackage{epsfig} 
\usepackage{graphicx}  
\usepackage{epstopdf} 
\usepackage{mathrsfs,dsfont}
\usepackage[colorlinks=true]{hyperref}
\hypersetup{urlcolor=blue, citecolor=red}

\newcommand{\rd}{{\rm d}}

\newcommand{\ri}{{\rm i}}

\newcommand{\cF}{\mathcal{F}}

\newcommand{\R}{\mathbb{R}}

\newcommand{\gl}{\mathfrak{\gl}}

\newcommand{\SO}{{\rm SO}}

\newcommand{\SU}{{\rm SU}}

\renewcommand{\epsilon}{\varepsilon}

\newcommand{\Ric}{{\rm Ric}}

\renewcommand{\Re}{\mathop{\mathrm{Re}}}

\newcommand{\tr}{\mathop{\mathrm{tr}}\nolimits}

\newcommand{\vol}{\mathrm{vol}}

\newcommand{\Rm}{\mathrm{Rm}}
\newcommand{\scal}{\mathrm{Scal}}

\newcommand{\gt}{\texorpdfstring{\mathrm{G}_2}{\space}}

\newcommand{\qandq}{\quad\text{and}\quad}

\def\<{\mathopen{}\left<}
\def\>{\right>\mathclose{}}
\def\({\mathopen{}\left(}
\def\){\right)\mathclose{}}

\usepackage{multicol, color}

\definecolor{gold}{rgb}{0.85,.66,0}
\definecolor{cherry}{rgb}{0.9,.1,.2}
\definecolor{burgundy}{rgb}{0.8,.2,.2}
\definecolor{orangered}{rgb}{0.85,.3,0}
\definecolor{orange}{rgb}{0.85,.4,0}
\definecolor{olive}{rgb}{.45,.4,0}
\definecolor{lime}{rgb}{.6,.9,0}
\definecolor{green}{rgb}{.2,.7,0}
\definecolor{grey}{rgb}{.4,.4,.2}
\definecolor{brown}{rgb}{.4,.3,.1}

\newtheorem{theorem}{Theorem}
\newtheorem{prop}{Proposition}
\newtheorem{cor}{Corollary}

\numberwithin{substep}{step}

\numberwithin{subcase}{case}

\theoremstyle{remark}
\newtheorem{remark}{Remark}
\newtheorem{observation}{Observation}

\theoremstyle{definition}
\newtheorem{definition}{Definition}

\DeclareMathOperator{\rivarphi}{\ri_{\varphi}}

\DeclareMathOperator{\Gl}{Gl}

\usepackage{amssymb}

	\title[\bf Geometric Reductions of the $G_2$-Hilbert Functional via Circle Actions] %
{ Geometric Reductions of the $G_2$-Hilbert Functional via Circle Actions}  
\author[Julieth Saavedra]{}

\subjclass[2020]{Primary: 53C10, 53E30; Secondary: 53C25, 53C29, 53C44}

\keywords{differential geometry, $G_2$-structures, $G$-structures, geometric flows, variational problems, $S^1$-invariant geometry}

\email{julieth.p.saavedra@gmail.com}

\thanks{$^*$Corresponding author: Julieth Saavedra}

\begin{document}
	\maketitle

\begin{center}
        \begin{minipage}{5cm}
	\centerline{\scshape Julieth Saavedra $^*$}
	\medskip
	{\footnotesize
		\centerline{Escuela de Ciencias Físicas y Matemáticas}
		\centerline{Universidad de Las Américas}
		\centerline{V\'ia a Nay\'on, C.P.170124, Quito, Ecuador}
	} 
\end{minipage}
\end{center}	
	
\bigskip

\begin{abstract}
In this paper, we study critical points and gradient flows of the $G_2$--Hilbert functional on a manifolds with free $\mathbb S^1$--actions. We analyze $\mathbb S^1$--invariant $G_2$--structures under the constant fiber-length non-K\"ahler transverse ansatz, reducing the variational problem to the $6$--dimensional quotient and we also consider a Gibbons--Hawking-type ansatz with varying fiber length and derive the formal negative $L^2$--gradient flow. We conclude that the unnormalized flow admits only trivial stationary configurations: flat connection, scalar-flat base metric, and constant fiber length. 
\end{abstract}

\section{Introduction}
Geometric flows of $G_2$-structures have been studied by many authors, such as \cite{Bryant2011,Dwivedi2019,fino2017closed,Karigiannis2007,lauret2021search,lotay2022CCY,Lotay2017}. Most of these flows come from functionals that take a class of curves over the tangent space of $G_2$ structures. Examples of $G_2$-structure flows that connect with functionals are the Laplacian flow \cite{Bryant2011} and the isometric gradient flow \cite{Dwivedi2019}. The importance of a flow arising from a functional is that one expects to use tools similar to those introduced by Perelman for the Ricci flow \cite{perelman}.

One of the important properties that Perelman used to show that the Ricci flow can be viewed as a gradient flow was the fact that he defined a functional invariant under diffeomorphisms. Recall that the Ricci flow itself admits a variational interpretation, in an appropriate sense, on a space arising from Riemannian metrics \cite{perelman}. Taking this into account, in \cite{gianniotis2025} they attempted to define a geometric functional that satisfied that property in the context of $G_2$-structures. They called $G_2$-Hilbert functional and it is defined as 
\begin{equation}\label{Functional}
    \mathcal F(\varphi) = \int_M \left( \frac{1}{6}\scal(g_\varphi) - \frac{1}{3}|T|_{g_\varphi}^2 - \frac{1}{6}(\tr T)^2 \right) dV_{g_\varphi},
\end{equation}

where $g_\varphi$ is the metric induced by the $G_2$-structure $\varphi$, $T$ denotes its full torsion tensor and $\scal(g_{\varphi})$ is the scalar curvature of the Levi-Civita connection associated with $g_{\varphi}$. 

In this paper, we study the $G_2$-Hilbert functional for $G_2$-structures on a seven-dimensional spin orientable manifold $M$ endowed with a free $\mathbb{S}^1$-action. More precisely, we consider $\mathbb{S}^1$-invariant $G_2$-structures, namely those satisfying $\mathcal{L}_\xi\varphi=0$, where $\xi$ denotes the vector field generated by the $\mathbb{S}^1$-action.

In different contexts, an $\mathbb{S}^1$-action on a manifold admitting a $G_2$-structure has been used to analyze the behavior of families of $G_2$-structures satisfying certain geometric properties; for instance, families arising as solutions of geometric flows. This is the case, for example, for closed $G_2$-structures under the Laplacian flow \cite{fowdar}. Another example appears in the Kähler context, where manifolds with holonomy equal to $G_2$ and admitting an $\mathbb{S}^1$-action are studied in relation to quotient geometries \cite{apostolov2004kahler}.

Therefore, we take a free $\mathbb{S}^1$-action generated by a nowhere-vanishing vector field $\xi$ and let $
h=\|\xi\|_\varphi^{-2},
$ be a function determined by the $\mathbb{S}^1$-action and the metric $g_{\varphi}$, and let $
\eta=h\,g_\varphi(\xi,\cdot)
$ be a $1$-form on $M$. 
Then, in Proposition \ref{Prop.1}, we show that $h$ is a positive basic function, $\eta$ is an $\mathbb{S}^1$-invariant connection $1$-form satisfying $
\eta(\xi)=1,
$ and $\varphi$ determines an induced $SU(3)$-structure $(\omega,\Omega)$ on the quotient $
N=M/\mathbb{S}^1.
$
The $\SU(3)$--structure $(\omega, \Omega=\Omega_+ + i\Omega_-)$ on the quotient $N=M/S^1$ is defined by the basic forms:
\begin{equation}\label{Eq.SU(3)}
    \begin{split}
        \pi^*\omega &= \iota_\xi\varphi, \\
        \pi^*\Omega_+ &= h^{-3/4}(\varphi-\eta\wedge\iota_\xi\varphi), \\
        \pi^*\Omega_- &= -h^{-1/4}\iota_\xi(*_\varphi\varphi).
    \end{split}
\end{equation}

Thus, the $G_2$--structure is given by 
\begin{equation}
    \begin{split}
        \varphi &= \eta\wedge \pi^*\omega + h^{3/4}\pi^*\Omega_+, \\
        *_\varphi\varphi &= \frac12\,h\,\pi^*(\omega^2)-h^{1/4}\eta\wedge \pi^*\Omega_-, \\
        g_\varphi &= h^{-1}\eta^2 + h^{1/2}\pi^*g_{(\omega,\Omega)}.
    \end{split}
\end{equation}

We are interested in considering two ansätze for $\mathbb{S}^1$-invariant $G_2$-structures on a manifold $M$ endowed with a free $\mathbb{S}^1$-action. The first one is given by
$h=1$ and $d\eta=\pi^*F$, where
\[
F=F_0^{1,1}+\lambda\,\omega,
\]
with $F_0^{1,1}$ the primitive real $(1,1)$-component and $\lambda$ a basic function, and where the transverse $SU(3)$-structure satisfies
\[
d\Omega=\theta\wedge\Omega,
\qquad
d\omega=\frac{2}{3}\theta\wedge\omega+\nu_3.
\]
where $\theta$ is a basic form and $\nu_3$ is the primitive torsion component of type $W_3$ and the second case is the Gibbons--Hawking-type ansatz, where one assumes that $\varphi$ is closed. For the first case, we can write the functional reduces as
\begin{equation}
    \mathcal F(\varphi)
=
-2\pi\int_N
\left(
\frac{15}{8}\lambda^2
+\frac{5}{18}|\theta|_{g_N}^2
+\frac14|\nu_3|_{g_N}^2
+\frac14|F_0^{1,1}|_{g_N}^2
\right)dV_{g_N}.
\end{equation}
In Theorem \ref{Th.Grad.flow_W345}, we obtain the associated gradient flow and solve part of the system. Moreover, in Corollary \ref{cor.critical.points.nonKahler}, we show that the only formal critical point of the reduced $G_2$--Hilbert functional is given by
\[
\lambda=0,
\qquad
\theta=0,
\qquad
\nu_3=0,
\qquad
F_0^{1,1}=0.
\]
Equivalently,
\[
d\eta=0,
\qquad
d\omega=0,
\qquad
d\Omega=0.
\]
Thus, the formal critical point corresponds to a flat connection over a transverse Calabi--Yau $SU(3)$-structure.

For the Gibbons--Hawking-type ansatz, we conclude that the $G_2$-Hilbert functional is given by
$$   \mathcal{F}(\varphi)
=
2\pi\int_N
\left(
\frac12 h^{1/2}\,\scal(g_\omega)
-\frac18 h^{-1}|d\eta|_{g_\omega}^2
\right)dV_{g_\omega}.$$
As we show in Theorem \ref{Th.gradientGH}, the resulting flow is a highly coupled, nonlinear system of evolution equations that simultaneously deforms the metric $g$ of the base, the fiber-length function $h$, and the connection $\eta$ on the principal bundle. Moreover, Corollary \ref{cor:flow_consequences} shows that the de Rham class of the curvature form $F=d\eta$ is preserved along the flow, while any stationary limit satisfies
$$
d^*(h^{-1}F)=0,
\qquad
\scal(g)=-\frac12 h^{-3/2}|F|^2\leq 0.
$$
In particular, Proposition \ref{prop.rig.gh} shows that, on a closed base, the unnormalized flow is rigid at stationary points and the only stationary configurations are those with
$$
F\equiv 0,
\qquad
\scal(g)=0,
\qquad
h=\mathrm{constant}.
$$
Thus, nontrivial stationary limits cannot occur for the unnormalized flow in this class.

\section{Preliminaries}

\subsection{$\SU(3)$-Structures}
\begin{definition}
An $\SU(3)$-structure on a $6$-manifold $N$ is a pair of smooth differential forms $(\omega,\Omega)$, where $\omega$ is a real non-degenerate $2$-form and $\Omega$ is a complex volume form, satisfying
\begin{equation}
    \omega\wedge\Re\Omega=0,\quad \frac{1}{6}\omega^3=\frac{1}{4}\text{Re}\Omega\wedge\text{Im}\Omega 
\end{equation}
\end{definition}
In \cite{Hitchin2000}, Hitchin shows that $\Omega_+$, or $\Omega_-$, determines the almost complex structure $J$, then the metric is given by $g_{\omega}(\cdot,\cdot)=\omega(\cdot,J\cdot)$.  As $S U(3)$ modules the space of exterior differential forms splits as (See \cite{Chiossi2001})

$$
\begin{aligned}
& \Lambda^1=\Lambda_6^1=[[\Lambda^{1,0}]] \\
& \Lambda^2=\langle\omega\rangle \oplus \Lambda_6^2 \oplus \Lambda_8^2 \\
& \Lambda^3=\left\langle\Omega_{+}\right\rangle \oplus\left\langle\Omega_{-}\right\rangle \oplus \Lambda_6^3 \oplus \Lambda_{12}^3
\end{aligned}
$$

where

$$
\begin{aligned}
\Lambda_6^2=[[\Lambda^{2,0}]]  & =\left\{\alpha \in \Lambda^2 \mid *_\omega(\alpha \wedge \omega)=\alpha\right\} \\
& =\left\{*_\omega\left(\alpha \wedge \Omega^{+}\right) \mid \alpha \in \Lambda_6^1\right\} \\
\Lambda_8^2=\left[\Lambda_0^{1,1}\right] & =\left\{\alpha \in \Lambda^2 \mid *_\omega(\alpha \wedge \omega)=-\alpha\right\} \\
& =\left\{\alpha \in \Lambda^2 \mid \alpha \wedge \omega^2=0 \text { and } \alpha \wedge \Omega^{+}=0\right\} \\
\Lambda_6^3=[[\Lambda^{2,1} ]]& =\left\{\alpha \wedge \omega \mid \alpha \in \Lambda_6^1\right\} \\
\Lambda_{12}^3=[[\Lambda_0^{2,1} ]]& =\left\{\alpha \in \Lambda^3 \mid \alpha \wedge \omega=0, \alpha \wedge \Omega^{ \pm}=0\right\}
\end{aligned}
$$

The intrinsic torsion of the $\SU(3)$-structure ( $\omega, \Omega^{+}$) is determined by

$$
\begin{aligned}
d \omega & =-\frac{3}{2} \sigma_0 \Omega^{+}+\frac{3}{2} \pi_0 \Omega^{-}+\nu_1 \wedge \omega+\nu_3 \\
d \Omega^{+} & =\pi_0 \omega^2+\pi_1 \wedge \Omega^{+}-\pi_2 \wedge \omega \\
d \Omega^{-} & =\sigma_0 \omega^2+\left(J \pi_1\right) \wedge \Omega^{+}-\sigma_2 \wedge \omega
\end{aligned}
$$

where $\sigma_0, \pi_0$ are functions, $\nu_1, \pi_1 \in \Lambda_6^1, \pi_2, \sigma_2 \in \Lambda_8^2$ and $\nu_3 \in \Lambda_{12}^3$, cf. \cite{Chiossi2001}.

\subsection{$G_2$-structure}

Let $\{e_1,\cdots,e_7\}$ be the standard basis of $\R^7$ and we will denote its dual by $\{e^1,\cdots,e^7\}$. We define a $3$-form $\phi$ by 
\begin{equation}
    \phi=e^{123}+e^{145}+e^{167}+e^{246}-e^{257}-e^{347}-e^{356}
\end{equation}
where $e^{ijk}=e^i\wedge e^j\wedge e^k$. The subgroup of $\Gl(7,\R)$ fixing $\phi$ is the Lie group $G_2$, which is compact, connected, simple lie subgroup of $\SO(7)$ of dimension $14$. We also have that $G_2$ preserve the $4$-form 
\begin{equation}
    \ast_{\phi}\phi=e^{4567}+e^{2367}+e^{2345}+e^{1357}-e^{1346}-e^{1256}-e^{1247}.
\end{equation}
where $\ast_\phi$ is the Hodge star operator determined by the canonical metric and orientation of $\R^7$. Let $M$ be a be a smooth $7$-manifold which is oriented and spin . For $x\in M$ we let
\begin{equation*}
    \Lambda^3_{+}(M)_x=\{\varphi_x\in\Lambda^3T^{\ast}M|\exists u\in \text{Hom}(T_xM,\R^7),u^{\ast}\phi=\varphi_x\}
\end{equation*}
and thus, we obtain a bundle $\Lambda^3_+(M)=\sqcup_x\Lambda^3_+(M)_x$ which is an open subbundle of $\Lambda^3T^{\ast}M$, with fiber $\Gl(7,\R)/G_2$. We call a positive $3$-form $\varphi$ a section of the bundle $\Lambda^3_+(M)$ and we will denote by $\Omega^3_+(M)=\Gamma(\Lambda^3_+(M))$. For a positive $3$-form $\varphi$, we can associate a bilinear form $B_{\varphi}(u,v)=\frac{1}{6}(u\lrcorner \varphi)\wedge(v\lrcorner\varphi)\wedge\varphi$
where $u,v$ are tangent vectors on $M$. Then, we can see that $\varphi\in\Omega^3_+(M)$ determines a Riemannian metric $g_{\varphi}$ and an orientation $dV_{\varphi}$, hence the Hodge star operator $\ast_{\varphi}$ and the associated $4$-form $\psi:=\ast_{\varphi}\varphi$. A $\gt$-structure gives rise to a decomposition of the space of differential $k$-forms $\Omega^k$ on $M$ into irreducible $\gt$-submodules. For instance,
\begin{align*}
	\Omega^2 &= \Omega^2_7\oplus\Omega^2_{14}\qandq
	\Omega^3  = \Omega^3_1\oplus\Omega_{7}^{3}\oplus\Omega^3_{27},
\end{align*}
where $\Omega^k_l$ has (pointwise) dimension $l$.According with the $\gt$-decomposition of $\Omega^4$ and $\Omega^5$, the exterior derivative of $\varphi$ and $\psi$ are completely described in term of the \emph{torsion forms} $\tau_0\in\Omega^0$, $\tau_1\in\Omega^1$,  $\tau_2\in\Omega_{14}^2$ and $\tau_3\in\Omega^3_{27}$,  given in terms of (see ~\cite[Proposition 1]{Bryant2006})
\begin{align}\label{eq: Fernandez dpsi}
\begin{split}
  	\rd\varphi 
  	=&\tau_0\psi+ 3\tau_1\wedge\varphi+\ast\tau_3  \in  \Omega_1^4\oplus\Omega^4_7\oplus\Omega^4_{27}\\ 
	\rd\psi 
=&4\tau_1\wedge\psi+\tau_2\wedge\varphi \in  \Omega^5_7\oplus\Omega^5_{14}.  
\end{split}
\end{align}
Moreover, for the \emph{full torsion tensor} is defined locally by (see \cite{Karigiannis2007})
\begin{equation}
\label{Eq.nabla.varphi}
	\nabla_i\varphi_{jkl}
	=T_i^m\psi_{mjkl}.
\end{equation}
In addition, from \eqref{Eq.nabla.varphi} for the $4$-form $\psi$, we have
\begin{equation*}
    \nabla_m\psi_{ijkl}=-(T_{mi}\varphi_{jkl}-T_{mj}\varphi_{ikl}-T_{mk}\varphi_{jil}-T_{ml}\varphi_{jki}).
\end{equation*}
Thus, the full torsion tensor $T$ is given in terms of the torsion forms by
\begin{equation}
\label{Eq:Torsion}
	T =\frac{\tau_0}{4}g -\tau_{27}-\tau_1^{\sharp}\lrcorner\varphi -\frac{1}{2}\tau_2 ,
\end{equation}
where $\tau_{27}$ is the trace-free symmetric $2$-tensor satisfying $\tau_3=\rivarphi(\tau_{27})$ and $\tau_1^\sharp$ denotes the unique vector field induced by $\tau_1$ and the Riemannian metric $g$, (i.e. $g(\tau_1^\sharp,X)=\tau_1(X)$ for any $\in \mathfrak{X}(M)$) and its trace and norm are related to the torsion forms by
\begin{equation}\label{eq:Trelations}
\operatorname{tr}(T)=\frac{7}{4}\,\tau_0,
\qquad
|T|^2=\frac{7}{16}\tau_0^2
+24|\tau_1|^2
+\frac12|\tau_2|^2
+\frac12|\tau_3|^2.
\end{equation}

A $G_2$--structure $\varphi$ determines uniquely a Riemannian metric $g_\varphi$ on $M$, and hence its Levi--Civita connection $\nabla^{g_\varphi}$. The corresponding Riemann curvature tensor is the $(3,1)$--tensor defined by
\begin{equation}
\Rm(X,Y)Z
:=
\nabla^{g_\varphi}_X\nabla^{g_\varphi}_Y Z
-
\nabla^{g_\varphi}_Y\nabla^{g_\varphi}_X Z
-
\nabla^{g_\varphi}_{[X,Y]}Z,
\end{equation}
for all vector fields $X,Y,Z\in \mathfrak X(M)$. In a local coordinate chart $\{x^1,\dots,x^7\}$ on $M$, we write
\begin{equation}
\Rm\!\left(\frac{\partial}{\partial x^i},\frac{\partial}{\partial x^j}\right)\frac{\partial}{\partial x^k}
=
R_{ijk}{}^{l}\frac{\partial}{\partial x^l},
\qquad
R_{ijkl}=g_{lm}R_{ijk}{}^{m}.
\end{equation}
Thus, the curvature of $g_\varphi$ may be described locally in coordinates through its components $R_{ijk}{}^l$.

On the other hand, from the point of view of $G_2$--geometry, the scalar curvature admits an intrinsic expression in terms of the torsion forms of $\varphi$. More precisely, one has (see \cite{Bryant2006})
\begin{equation}\label{eq.Scal.curvature}
\scal(g_\varphi)
=
12\,d^*\tau_1
+\frac{21}{8}\tau_0^2
+30|\tau_1|^2
-\frac12|\tau_2|^2
-\frac12|\tau_3|^2.
\end{equation}

\section{$S^1$-invariant action of $G_2$-structures}
 
It is well known that the Ricci flow can be interpreted, in an appropriate sense, as a gradient flow on the space of Riemannian metrics. One natural approach to this interpretation is through the Einstein--Hilbert functional. However, the presence of additional terms arising from diffeomorphism invariance complicates the identification of the Ricci flow as the pure gradient flow of this functional. In \cite{gianniotis2025}, the authors introduced a diffeomorphism-invariant $G_2$-Hilbert functional. Our aim is to analyze this $\mathcal{F}$-functional in the case of manifolds admitting a regular $S^1$-action.
\begin{prop}\label{Prop.1}
Let $M$ be a $7$-dimensional manifold endowed with a free $\mathbb{S}^1$-action generated by a nowhere-vanishing vector field $\xi$. Assume that $M$ carries an $\mathbb{S}^1$-invariant $G_2$-structure $\varphi$, that is, $
\mathcal L_\xi \varphi=0.
$ Let
$$
h=\|\xi\|_\varphi^{-2},
\qquad
\eta=h\,g_\varphi(\xi,\cdot).
$$
Then $h$ is a positive basic function, $\eta$ is an $\mathbb{S}^1$-invariant connection $1$-form satisfying $
\eta(\xi)=1,
$ and $\varphi$ determines an induced $SU(3)$-structure $(\omega,\Omega)$ on the quotient $
N=M/\mathbb{S}^1.
$ Hence the invariant $G_2$-structure $\varphi$ is equivalent to reduced geometric data
$
(h,\eta,\omega,\Omega)
$
on the quotient. 
\end{prop}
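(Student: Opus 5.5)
The plan is to exploit the fact that $\mathcal L_\xi\varphi=0$ forces every object canonically built from $\varphi$ to be $\mathbb S^1$-invariant. The metric $g_\varphi$ is built algebraically from $\varphi$ through $B_\varphi$. Hence $\mathcal L_\xi g_\varphi=0$, so $\xi$ is Killing, and likewise $\mathcal L_\xi\psi=0$ and $\mathcal L_\xi\ast_\varphi=0$.

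First, $h=g_\varphi(\xi,\xi)^{-1}$ is smooth and positive because $\xi$ vanishes nowhere. It satisfies $\xi(h)=-h^2(\mathcal L_\xi g_\varphi)(\xi,\xi)=0$, using $\mathcal L_\xi\xi=0$. So $h$ is $\mathbb S^1$-invariant, and since $\iota_\xi dh=0$ it is basic, i.e.\ it descends to $N$. Next, $\eta(\xi)=h\,g_\varphi(\xi,\xi)=1$ by construction. Invariance follows from $\mathcal L_\xi\eta=(\xi h)\,g_\varphi(\xi,\cdot)+h(\mathcal L_\xi g_\varphi)(\xi,\cdot)+h\,g_\varphi([\xi,\xi],\cdot)=0$. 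An invariant $1$-form with $\eta(\xi)=1$ is exactly a principal connection on $\pi:M\to N$. Freeness of the action makes $N$ a smooth $6$-manifold, and $H=\ker\eta=\xi^\perp$ is the horizontal distribution.

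The key tool for descending forms is that a form $\alpha$ is basic iff $\iota_\xi\alpha=0$ and $\mathcal L_\xi\alpha=0$. Define the $2$-form $\iota_\xi\varphi$, the $3$-form $h^{-3/4}(\varphi-\eta\wedge\iota_\xi\varphi)$ and the $3$-form $-h^{-1/4}\iota_\xi\psi$. Each is killed by $\iota_\xi$, using $\iota_\xi\iota_\xi=0$ and $\eta(\xi)=1$. Each is Lie-invariant by Cartan's formula, since $\mathcal L_\xi$ commutes with $\iota_\xi$ when $[\xi,\xi]=0$ and $h,\eta,\varphi,\psi$ are invariant. Hence they equal $\pi^*\omega$, $\pi^*\Omega_+$ and $\pi^*\Omega_-$ as in \eqref{Eq.SU(3)}.

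To see that $(\omega,\Omega)$ is an $\SU(3)$-structure, I will work pointwise in an adapted frame. Rescale so that $e_1=h^{1/2}\xi$ is a unit vector, with $e^1=h^{-1/2}\eta$, and choose $e^2,\dots,e^7$ on $H$ so that $\varphi$ takes the standard form $\phi$; this is possible because $G_2$ acts transitively on unit vectors. Then $\varphi=e^1\wedge\omega_0+\Omega_0^+$ with $\omega_0=e^{23}+e^{45}+e^{67}$ and $\Omega_0^+$ the standard real part, and $\psi=\tfrac12\omega_0^2-e^1\wedge\Omega_0^-$. These model forms satisfy the $\SU(3)$ relations of the Definition for the metric $g_H=g_\varphi|_H$. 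Translating back, $\iota_\xi\varphi=h^{-1/2}\omega_0$.

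The main obstacle is the bookkeeping of conformal weights. One must check that the specific powers $h^{-3/4}$ and $h^{-1/4}$ make $\omega$ and $\Omega_\pm$ satisfy $\tfrac16\omega^3=\tfrac14\Omega_+\wedge\Omega_-$ for one common metric. They do so with the transverse metric $g_{(\omega,\Omega)}=h^{-1/2}g_H$, because scaling the metric by $c^2$ scales $\omega$ by $c^2$ and $\Omega$ by $c^3$, here with $c=h^{-1/4}$. Orientation signs for $\Omega_-$ must be matched to the sign convention in $\psi$. This gives $g_\varphi=h^{-1}\eta^2+h^{1/2}\pi^*g_{(\omega,\Omega)}$.

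Conversely, the displayed formulas reconstruct $\varphi$ from $(h,\eta,\omega,\Omega)$: write $\varphi=\eta\wedge\iota_\xi\varphi+(\varphi-\eta\wedge\iota_\xi\varphi)$ and substitute. This yields the claimed equivalence.
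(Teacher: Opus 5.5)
Your proof is correct and follows essentially the same route as the paper. Invariance of $g_\varphi$ makes $\xi$ Killing, so $h$ is positive and basic and $\eta$ is an invariant connection; since $G_2$ acts transitively on unit vectors with stabilizer $\SU(3)$, fixing $h^{1/2}\xi$ gives a transverse $\SU(3)$-structure on $\mathcal H$ that descends by invariance. You go further than the paper's proof in one respect: you check that the forms in \eqref{Eq.SU(3)} are basic, and that the weights $h^{-3/4}$ and $h^{-1/4}$ make $(\omega,\Omega)$ an $\SU(3)$-structure for $g_{(\omega,\Omega)}=h^{-1/2}g_\varphi|_{\mathcal H}$, both of which the paper leaves implicit.
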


\begin{proof}
Since the $G_2$-structure $\varphi$ is $\mathbb{S}^1$-invariant ($\mathcal{L}_\xi \varphi = 0$), the induced Riemannian metric $g_\varphi$ is also invariant, meaning $\xi$ is a Killing vector field ($\mathcal{L}_\xi g_\varphi = 0$). Because the action is free, $\xi$ is nowhere vanishing, which implies $\|\xi\|_\varphi^2 > 0$, and thus $h > 0$ globally. The invariance of the metric implies $\mathcal{L}_\xi (\|\xi\|_\varphi^2) = 0$, so $dh(\xi) = 0$. Since $h$ is constant along the flow of $\xi$ and invariant under the group action, it is a basic function and descends to a well-defined positive smooth function on the quotient $N$. Evaluating the $1$-form $\eta$ on the fundamental vector field yields to 
$
\eta(\xi)=1.
$ Furthermore, the Lie derivative of $\eta$ along $\xi$ is:
$$
\mathcal{L}_\xi \eta = (\mathcal{L}_\xi h) g_\varphi(\xi, \cdot) + h (\mathcal{L}_\xi g_\varphi)(\xi, \cdot) + h\, g_\varphi(\mathcal{L}_\xi \xi, \cdot) = 0,
$$
since $\mathcal{L}_\xi \xi = [\xi, \xi] = 0$. Moreover
, $\eta$ is invariant and evaluates to $1$ on the fundamental vector field, it defines a valid principal connection $1$-form on the bundle $\pi: M \to N$.

This connection induces a splitting of the tangent bundle $TM = \mathcal{V} \oplus \mathcal{H}$, where $\mathcal{V} = \text{span}(\xi)$ is the vertical distribution and $\mathcal{H} = \ker(\eta)$ is the horizontal distribution. At each point, the stabilizer of a non-zero vector in $\mathbb{R}^7$ under the linear action of $G_2$ is isomorphic to $\mathrm{SU}(3)$. By fixing the normalized vector field $h^{1/2}\xi$, the $G_2$-structure orthogonally decomposes, restricting to an $\mathrm{SU}(3)$-structure on the $6$-dimensional horizontal space $\mathcal{H}$. Because $\mathcal{L}_\xi \varphi = 0$, this horizontal algebraic structure is constant along the fibers and descends via $d\pi$ to uniquely define an $\mathrm{SU}(3)$-structure $(\omega, \Omega)$ on $TN$. Consequently, the invariant data $\varphi$ on $M$ is in bijection with the reduced tuple $(h, \eta, \omega, \Omega)$ on $N$. 
\end{proof}
In particular, any geometric functional on the space of $S^1$--invariant $G_2$--structures reduces to one depending on $(h,\eta,\omega,\Omega)$, where $h=\|\xi\|_\varphi^{-2}$ and $\eta=h\,g_\varphi(\xi,\cdot)$. With this setup, $\eta(\xi)=1$ and the quotient $N=M/S^1$ carries a transverse $SU(3)$--structure $(\omega,\Omega=\Omega_+ + i\,\Omega_-)$ such that:
\begin{equation}\label{Eq.G2-ansatz}
\begin{split}
    \varphi =& \eta\wedge \pi^*\omega + h^{3/4}\pi^*\Omega_+,\\
    *_\varphi\varphi =& \frac12\,h\,\pi^*(\omega^2) - h^{1/4}\eta\wedge \pi^*\Omega_-,\\
    g_\varphi =& h^{-1}\eta^2 + h^{1/2}\pi^*g_{(\omega,\Omega)}.
    \end{split}
\end{equation}

\begin{prop}
Let $\varphi$ be an $S^1$--invariant $G_2$--structure on a $7$--manifold $M$ with a free $S^1$--action generated by $\xi$. Let $\pi:M\longrightarrow N=M/S^1$ be the associated principal $S^1$--bundle. Defining $h=\|\xi\|_\varphi^{-2}$ and $\eta=h\,g_\varphi(\xi,\cdot)$, we have $\eta(\xi)=1$ and $\varphi$ takes the form
$$
\varphi = \eta\wedge \pi^*\omega + h^{3/4}\pi^*\Omega_+,
$$
where $(\omega,\Omega)$ is the induced $SU(3)$--structure on the quotient $N$.

Furthermore, the tangent space $T_\varphi\bigl(\Omega^3_+(M)^{S^1}\bigr)$ decomposes as
$$
C^\infty(N)\oplus \Omega^1(N)\oplus T_{(\omega,\Omega)}\mathcal{SU}(3),
$$
such that for any variation $(\dot h,\dot\eta,\dot\omega,\dot\Omega_+)$ in this space, the corresponding infinitesimal variation of $\varphi$ is given by
\begin{equation}\label{eq:variation-phi-S1-corrected}
\dot\varphi = \dot\eta\wedge \pi^*\omega + \eta\wedge \pi^*\dot\omega + \frac34 h^{-1/4}\dot h\,\pi^*\Omega_+ + h^{3/4}\pi^*\dot\Omega_+.
\end{equation}
\end{prop}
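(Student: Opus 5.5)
The plan has three parts: derive the normal form of $\varphi$ by splitting it along the connection $\eta$, identify the invariant tangent space from the bijection in Proposition \ref{Prop.1}, and obtain the variation formula by differentiating a curve.

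\textbf{Normal form.} By Proposition \ref{Prop.1}, $h$ is a positive basic function and $\eta$ is an invariant connection with $\eta(\xi)=1$. Any $3$-form splits along the connection as
$$\varphi=\eta\wedge\iota_\xi\varphi+(\varphi-\eta\wedge\iota_\xi\varphi),$$
where the second summand is annihilated by $\iota_\xi$. This uses $\iota_\xi\iota_\xi\varphi=0$ and $\eta(\xi)=1$.

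Both summands are invariant, since $\mathcal L_\xi\varphi=0$ and $\mathcal L_\xi\eta=0$. Hence each is horizontal and invariant, i.e.\ basic. Using the definitions in \eqref{Eq.SU(3)}, they equal $\pi^*\omega$ and $h^{3/4}\pi^*\Omega_+$ respectively. This gives $\varphi=\eta\wedge\pi^*\omega+h^{3/4}\pi^*\Omega_+$.

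To confirm that $(\omega,\Omega)$ really is an $SU(3)$-structure with the stated normalization, I would check pointwise in an adapted coframe. Take $e^1=h^{-1/2}\eta$ and an orthonormal horizontal coframe for $h^{1/2}\pi^*g_N$. In that frame the standard identity $\phi=e^1\wedge\omega_0+\Re\Omega_0$ holds. Comparing it with the metric $g_\varphi=h^{-1}\eta^2+h^{1/2}\pi^*g_N$ fixes the powers of $h$: a horizontal $k$-form rescales by $h^{k/4}$. This is the expected main obstacle: matching the powers $h^{3/4}$ and $h^{1/4}$ consistently with the normalizations of the previous display. I would handle it by this explicit frame computation rather than abstractly.

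\textbf{Tangent space.} The map $\Phi:(h,\eta,\omega,\Omega_+)\mapsto\eta\wedge\pi^*\omega+h^{3/4}\pi^*\Omega_+$ is a bijection onto $\Omega^3_+(M)^{S^1}$ by Proposition \ref{Prop.1}. Its domain is
$$C^\infty_{>0}(N)\times\mathcal A\times\mathcal{SU}(3).$$
Here $\mathcal A$ is the space of connections, which is affine and modeled on $\Omega^1(N)$, since the difference of two connections is a basic $1$-form. The domain is therefore an open subset of an affine space times $\mathcal{SU}(3)$, and its tangent space is $C^\infty(N)\oplus\Omega^1(N)\oplus T_{(\omega,\Omega)}\mathcal{SU}(3)$. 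Note that $\Omega_-$ is determined by $\Omega_+$ via Hitchin, so it is not independent data.

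\textbf{Variation formula.} Take a curve $(h_t,\eta_t,\omega_t,\Omega_{+,t})$ and differentiate $\Phi$ at $t=0$ using the Leibniz rule. Since $\frac{d}{dt}h_t^{3/4}=\frac34h^{-1/4}\dot h$, and pullback and wedge commute with $d/dt$, this gives \eqref{eq:variation-phi-S1-corrected} directly.

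It remains to check that $d\Phi$ is an isomorphism, so that this really is a decomposition of $T_\varphi$. Contract $\dot\varphi$ with $\xi$: this gives $\dot\eta(\xi)=0$ together with $\iota_\xi\dot\varphi=\pi^*\dot\omega$, so $\dot\omega$ is recovered. The horizontal part $\dot\eta\wedge\pi^*\omega+\tfrac34h^{-1/4}\dot h\,\pi^*\Omega_++h^{3/4}\pi^*\dot\Omega_+$ then determines the remaining data. Wedge-multiplication by $\omega$ is injective on $1$-forms in dimension $6$, and the $\dot h\,\Omega_+$ direction is separated from $\dot\Omega_+$ because $\dot h$ is recorded separately through $g_\varphi(\xi,\xi)$.

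I expect this last identification to be the subtle point. The $\Omega_+$-component of $\dot\Omega_+$ is only consistent if $T\mathcal{SU}(3)$ is taken with the normalization constraint from the definition, $\frac16\omega^3=\frac14\Omega_+\wedge\Omega_-$. I would impose that constraint to remove the redundancy.
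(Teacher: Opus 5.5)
Your proposal is correct and follows essentially the paper's route: you split $\varphi$ along $\eta$ into $\eta\wedge\iota_\xi\varphi$ plus a basic remainder, use the bijection of Proposition~\ref{Prop.1} to identify the tangent space, and differentiate a curve of reduced data, while adding a check that $d\Phi$ is an isomorphism, which the paper only asserts. In that check, the cleanest justification is that $\Phi$ has the smooth inverse $\varphi\mapsto\bigl(\|\xi\|_\varphi^{-2},\,h\,g_\varphi(\xi,\cdot),\,\iota_\xi\varphi,\,h^{-3/4}(\varphi-\eta\wedge\iota_\xi\varphi)\bigr)$, which recovers $\dot\eta$ (not just $\dot h$) directly; injectivity of $\wedge\omega$ alone would not separate $\dot\eta\wedge\omega$ from a $\Lambda^3_6$-component of $\dot\Omega_+$ unless you also use the linearized constraint $\dot\omega\wedge\Omega_+ + \omega\wedge\dot\Omega_+=0$.
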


\begin{proof}
From $h=\|\xi\|_\varphi^{-2}$ and $\eta=h\,g_\varphi(\xi,\cdot)$, it directly follows that $\eta(\xi) = h\|\xi\|_\varphi^2 = 1$. Thus, $\eta$ defines a connection one-form on the principal $S^1$--bundle $\pi:M\longrightarrow N$.

Since $\mathcal L_\xi\varphi=0$, the horizontal components of $\varphi$ are basic and descend to the quotient $N$. The $S^1$--invariant positive $3$--form $\varphi$ is therefore entirely determined by the quadruple $(h,\eta,\omega,\Omega)$ via the ansatz
$$
\varphi = \eta\wedge\pi^*\omega + h^{3/4}\pi^*\Omega_+.
$$

Considering a smooth one-parameter family of $S^1$--invariant positive $3$--forms $\varphi_t$ with $\varphi_0=\varphi$, its infinitesimal variation is determined by the variations of the induced data: a function $\dot h \in C^\infty(N)$, a basic one-form $\dot\eta \in \Omega^1(N)$, and a transverse $SU(3)$--structure variation $(\dot\omega,\dot\Omega) \in T_{(\omega,\Omega)}\mathcal{SU}(3)$. This establishes the isomorphism
$$
T_\varphi\bigl(\Omega^3_+(M)^{S^1}\bigr) \cong C^\infty(N)\oplus\Omega^1(N)\oplus T_{(\omega,\Omega)}\mathcal{SU}(3).
$$

Finally, differentiating the ansatz for $\varphi_t$ at $t=0$ immediately yields the corresponding formula for the variation:
$$
\dot\varphi = \dot\eta\wedge \pi^*\omega + \eta\wedge \pi^*\dot\omega + \frac34 h^{-1/4}\dot h\,\pi^*\Omega_+ + h^{3/4}\pi^*\dot\Omega_+.
$$
\end{proof}

\subsection{The constant fiber-length $W_3\oplus W_4\oplus W_5$ transverse ansatz}
Let $M$ be a manifold equipped with a free $S^1$-action and an $S^1$-invariant $G_2$-structure $\varphi$. Let $\xi$ denote the fundamental vector field of the action and assume the fiber-length function is constant, $h=1$. 
To work with non-K\"ahler transverse $SU(3)$-structures, we work with a more general condition
\begin{equation}\label{Eq.curvature-general}
d\eta=\pi^*F,
\qquad
dF=0,
\end{equation}
where $F$ is a basic closed $2$--form on the quotient
$N=M/\mathbb S^1.
$ With respect to the transverse $SU(3)$-structure $(\omega,\Omega)$ on $N$, the form $F$ decomposes as
\begin{equation}\label{Eq.F-decomposition}
F=F_0^{1,1}+\lambda\,\omega+F^{2,0+0,2},
\end{equation}
where $F_0^{1,1}$ denotes the primitive real $(1,1)$--component, $\lambda\,\omega$ is the trace component, with $\lambda$ a basic function on $N$, and $F^{2,0+0,2}$ is the real component of type $(2,0)+(0,2)$.

In the present work, we restrict our attention to the $(1,1)$--curvature case by imposing
\[
F^{2,0+0,2}=0.
\]
Thus,
\begin{equation}\label{Eq.F-11}
F=F_0^{1,1}+\lambda\,\omega
\end{equation}
Since $F$ is closed, we obtain
\begin{equation}\label{Eq.closed-F}
dF_0^{1,1}+d\lambda\wedge\omega+\lambda\,d\omega=0.
\end{equation}
We impose the following structure equations for the transverse geometry:
\begin{equation}\label{Eq.transverse-structure}
d\Omega=\theta\wedge\Omega,
\qquad
d\omega=\frac{2}{3}\theta\wedge\omega+\nu_3,
\end{equation}
where $\theta$ is a basic real $1$--form on $N$, and $\nu_3$ is the primitive torsion component of type $W_3$ of the transverse $SU(3)$-structure. In particular,
\[
\nu_3\in\Omega^3(N),
\qquad
\nu_3\wedge\omega=0.
\]
Whenever these equations are used on the total space $M$, all forms are understood as pulled back by $\pi$.
Substituting \eqref{Eq.transverse-structure} into \eqref{Eq.closed-F}, we obtain the compatibility condition
\begin{equation}\label{Eq.Compatibility}
dF_0^{1,1}
+
\left(
d\lambda+\frac{2}{3}\lambda\,\theta
\right)\wedge\omega
+
\lambda\,\nu_3
=0.
\end{equation}
This identity reflects the interaction between the curvature of the connection and the torsion of the transverse $SU(3)$-structure. Several cases are worth noting:
\begin{itemize}
    \item If $\lambda$ is a nonzero constant and $F_0^{1,1}$ is closed, then \eqref{Eq.Compatibility} forces
   $d\omega=0.
    $ Hence the transverse geometry reduces to the K\"ahler case, and in the present torsion ansatz this implies
   $ \theta=0,
    $ and $
    \nu_3=0.
    $

    \item If $\lambda=0$, then
    $F=F_0^{1,1},
    $ and the condition $dF=0$ reduces to
   $ dF_0^{1,1}=0.
    $ In this case, the curvature is purely primitive and does not force the transverse structure to be K\"ahler.

    \item If $\lambda$ is a nonconstant basic function, then the term
  $ d\lambda\wedge\omega
    $ can compensate for the non-closedness of $\omega$, allowing nontrivial Lee form $\theta$ and non-K\"ahler transverse torsion.
\end{itemize}

Consequently, the structural ansatz considered in this work is
\begin{equation}\label{Eq.structural-ansatz}
\begin{cases}
d(\pi^*\Omega)=\pi^*\theta\wedge\pi^*\Omega,\\[0.4em]
d(\pi^*\omega)=\dfrac{2}{3}\pi^*\theta\wedge\pi^*\omega+\pi^*\nu_3,\\[0.6em]
d\eta=\pi^*F=\pi^*F_0^{1,1}+\pi^*(\lambda\,\omega),
\end{cases}
\end{equation}
subject to the compatibility condition \eqref{Eq.Compatibility}. This framework keeps the curvature compatible with the transverse complex structure while allowing non-K\"ahler transverse torsion.

\begin{observation}
Throughout this work, in the constant fiber-length non-K\"ahler quotient ansatz setting, the forms $\omega$, $\Omega$, $\Omega_+$, $\Omega_-$, $\theta$, $\nu_3$, $F$, and $F_0^{1,1}$ are defined on the quotient manifold $N$. When they appear in formulas on the total space $M$, they should be understood as their pullbacks by $\pi$.
\end{observation}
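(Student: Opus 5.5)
The Observation is essentially a notational convention, so the argument I have in mind is only a verification that it is consistent. Two things need checking. First, each of the listed forms is \emph{basic} on $M$, i.e.\ horizontal ($\iota_\xi\alpha=0$) and $\mathbb S^1$-invariant ($\mathcal L_\xi\alpha=0$). Second, the structure equations \eqref{Eq.structural-ansatz} hold on $M$ if and only if the corresponding equations hold on $N$. I will use the standard fact that, for the principal bundle $\pi:M\to N$ (a surjective submersion with connected fibres), $\pi^*$ is injective. Its image is exactly the basic forms, so a basic form is the pullback of a unique form on $N$. Moreover $d\pi^*=\pi^*d$, so exterior derivatives may be computed on either side.

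For $\omega$ and $\Omega_\pm$ I will use the defining formulas \eqref{Eq.SU(3)}. For $\omega$ one has $\iota_\xi\iota_\xi\varphi=0$, and $\mathcal L_\xi\iota_\xi\varphi=\iota_\xi\mathcal L_\xi\varphi=0$ since $[\xi,\xi]=0$. For $\Omega_+$, using $\eta(\xi)=1$,
\[
\iota_\xi(\varphi-\eta\wedge\iota_\xi\varphi)=\iota_\xi\varphi-\iota_\xi\varphi+\eta\wedge\iota_\xi\iota_\xi\varphi=0 .
\]
Invariance follows from $\mathcal L_\xi\eta=0$ and $\mathcal L_\xi\varphi=0$ (Proposition \ref{Prop.1}), together with $h$ being basic. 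For $\Omega_-=-h^{-1/4}\iota_\xi(*_\varphi\varphi)$, horizontality is again $\iota_\xi\iota_\xi=0$. Invariance holds because $\xi$ is Killing, so $\mathcal L_\xi *_\varphi\varphi=0$.

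For the curvature I will use Cartan's formula:
\[
\iota_\xi d\eta=\mathcal L_\xi\eta-d(\eta(\xi))=0-d(1)=0,
\qquad
\mathcal L_\xi d\eta=d\mathcal L_\xi\eta=0 .
\]
Hence $d\eta=\pi^*F$ for a unique closed $F$ on $N$, and $dF=0$ follows from $\pi^*dF=dd\eta=0$ and injectivity of $\pi^*$. The forms $\theta$ and $\nu_3$ are introduced as forms on $N$ in \eqref{Eq.transverse-structure}, so nothing needs to be checked for them. The components $F_0^{1,1}$ and $\lambda$ are obtained from $F$ by pointwise $\mathrm{SU}(3)$-type projections defined by $(\omega,\Omega)$ on $N$, so they also live on $N$. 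Pulling back these algebraic decompositions commutes with $\pi^*$, because $d\pi$ identifies $\mathcal H=\ker\eta$ isometrically (for $h=1$) with $TN$.

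The only step needing any care is this last compatibility: wedge products and type decompositions computed on $M$ must agree with those computed on $N$. On $M$ the relevant operations act only on horizontal forms, and those are identified with forms on $N$ via $d\pi|_{\mathcal H}$. So I expect this to reduce to the fact that $\pi^*$ is an algebra homomorphism commuting with $d$, and I do not expect a real obstacle here.
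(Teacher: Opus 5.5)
Your verification is correct. The paper gives no argument for this Observation; it is stated as a pure convention. It rests implicitly on the one-line descent claim in the proof of Proposition \ref{Prop.1} (``because $\mathcal L_\xi\varphi=0$, this horizontal algebraic structure \dots descends via $d\pi$'') and on the fact that $\theta$, $\nu_3$ and $F$ are simply introduced as forms on $N$.

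Your route adds three things.

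First, it makes the descent explicit for the forms defined in \eqref{Eq.SU(3)}. Horizontality follows from $\iota_\xi\iota_\xi=0$ and $\eta(\xi)=1$. Invariance follows from $\mathcal L_\xi\varphi=0$, $\mathcal L_\xi\eta=0$, and the fact that $\mathcal L_\xi$ commutes with $*_\varphi$ for the Killing field $\xi$.

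Second, it \emph{derives} rather than postulates the condition in \eqref{Eq.curvature-general}. Cartan's formula shows that $d\eta$ is basic for any invariant connection form with $\eta(\xi)=1$. Hence $d\eta=\pi^*F$ for a unique $F$ on $N$, and $dF=0$ is automatic from $dd\eta=0$ and injectivity of $\pi^*$. The ``more general condition'' the paper imposes there is therefore no restriction at all.

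Third, you note that for $h=1$ the map $d\pi|_{\ker\eta}$ is an isometry onto $(TN,g_N)$. This is exactly what justifies computing the pointwise norms of pulled-back forms in Proposition \ref{prop.bwansatz} with $g_N$ rather than $g_\varphi$, which the paper does without comment.
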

\begin{prop}\label{prop.bwansatz}
Let $\varphi$ be an $\mathbb S^1$-invariant $G_2$-structure on $M$ defined by the constant fiber-length ansatz
\begin{equation}
    \varphi = \eta \wedge \omega + \Omega_+,
    \qquad
    *_\varphi\varphi = \frac{1}{2}\omega^2 - \eta \wedge \Omega_-.
\end{equation}
Assume also the ansatz given by \eqref{Eq.structural-ansatz}. Then the torsion forms of $\varphi$ are given by
\begin{equation}
    \begin{split}
        \tau_0 &= \frac{6\lambda}{7}, \\
        \tau_1 &= \frac{5}{18}\,\theta, \\
        \tau_2 &= \frac{2}{9}\,\eta\wedge J\theta
        + \frac{1}{9}\,\iota_{\theta^\sharp}\Omega_+, \\
        \tau_3 &=
        \frac{8\lambda}{7}\,\eta\wedge\omega
        -\frac{6\lambda}{7}\,\Omega_+
        +\frac{1}{6}\,J\theta\wedge\omega
        -\frac{1}{6}\,\eta\wedge\iota_{\theta^\sharp}\Omega_-
        -*_6\nu_3
        -\eta\wedge F_0^{1,1}.
    \end{split}
\end{equation}
Furthermore, the trace and squared norm of the intrinsic torsion tensor $T_\varphi$, together with the scalar curvature $\mathrm{Scal}(g_\varphi)$, satisfy
\begin{align}
    \operatorname{Tr}(T_\varphi)
    &=
    \frac{3}{2}\lambda,
    \\
    |T_\varphi|^2
    &=
    \frac{15}{4}\lambda^2
    +
    \frac{35}{18}|\theta|^2
    +
    \frac{1}{2}|\nu_3|^2
    +
    \frac{1}{2}|F_0^{1,1}|^2,
    \\
    \mathrm{Scal}(g_\varphi)
    &=
    \frac{10}{3}\,d^*\theta
    -
    \frac{3}{2}\lambda^2
    +
    \frac{20}{9}|\theta|^2
    -
    \frac{1}{2}|\nu_3|^2
    -
    \frac{1}{2}|F_0^{1,1}|^2.
\end{align}
Here $J$ is the almost complex structure determined by $(\omega,\Omega)$, $\theta^\sharp$ is the metric dual of $\theta$ with respect to $g_{(\omega,\Omega)}$, and $*_6$ denotes the Hodge star operator on the quotient.
\end{prop}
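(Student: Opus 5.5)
The plan is a direct computation. First compute $d\varphi$ and $d\psi$ from the structure equations \eqref{Eq.structural-ansatz}. Then project these onto the $G_2$-irreducible pieces in \eqref{eq: Fernandez dpsi} to read off $\tau_0,\dots,\tau_3$. Finally, insert the torsion forms into \eqref{eq:Trelations} and \eqref{eq.Scal.curvature}.

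With $h=1$ and $d\eta=F$, we have
\[
d\varphi = F\wedge\omega - \eta\wedge d\omega + d\Omega_+ ,
\qquad
d\psi = \omega\wedge d\omega - F\wedge\Omega_- + \eta\wedge d\Omega_- .
\]
We substitute $d\omega=\tfrac23\theta\wedge\omega+\nu_3$ and $d\Omega_\pm$ read off from $d\Omega=\theta\wedge\Omega$, namely $d\Omega_+=\theta\wedge\Omega_+$ and $d\Omega_-=\theta\wedge\Omega_-$. We also use $F=F_0^{1,1}+\lambda\omega$ together with the $SU(3)$ identities $F_0^{1,1}\wedge\omega^2=0$, $F_0^{1,1}\wedge\Omega_\pm=0$ and $\omega\wedge\Omega_\pm=0$. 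These give
\[
d\varphi=\lambda\omega^2+F_0^{1,1}\wedge\omega-\tfrac23\eta\wedge\theta\wedge\omega-\eta\wedge\nu_3+\theta\wedge\Omega_+ ,
\qquad
d\psi=\tfrac23\theta\wedge\omega^2+\omega\wedge\nu_3+\eta\wedge\theta\wedge\Omega_- ,
\]
where $\omega\wedge\nu_3=0$ by primitivity.

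Next, the torsion forms are extracted by the standard contractions. For $\tau_0$ we use $\tau_0=\tfrac17*(\varphi\wedge d\varphi)$. Here $\varphi\wedge d\varphi$ picks up $\eta\wedge\omega\wedge\lambda\omega^2$ together with the $\Omega_+\wedge(\cdots)$ terms, and since $\mathrm{vol}=\tfrac16\eta\wedge\omega^3$ this should yield $6\lambda/7$.

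For $\tau_1$ we use $\tau_1=\tfrac1{12}*(\varphi\wedge*d\varphi)$, or equivalently the $\Omega^5_7$-part of $d\psi$. To do this, decompose the $5$-form $\tfrac23\theta\wedge\omega^2+\eta\wedge\theta\wedge\Omega_-$ into $4\tau_1\wedge\psi$ plus a $\Omega^2_{14}\wedge\varphi$ piece. A horizontal $1$-form $\theta$ enters $\Omega^2_7$ as $\theta^\sharp\lrcorner\varphi=\eta\wedge\theta\!\cdot\!(\ldots)$. The decomposition amounts to solving a $2\times2$ linear system in the coefficients of $\eta\wedge J\theta$ and $\iota_{\theta^\sharp}\Omega_+$. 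Carrying this out should give $\tau_1=\tfrac5{18}\theta$ and $\tau_2=\tfrac29\eta\wedge J\theta+\tfrac19\iota_{\theta^\sharp}\Omega_+$. We then check that this $\tau_2$ satisfies $\tau_2\wedge\psi=0$.

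Finally, $\tau_3$ is defined by
\[
*\tau_3=d\varphi-\tau_0\psi-3\tau_1\wedge\varphi .
\]
We apply the $7$-dimensional Hodge star using the product-type formulas: $*(\alpha)=(-1)^k\eta\wedge*_6\alpha$ for a horizontal $k$-form $\alpha$, and $*(\eta\wedge\beta)=*_6\beta$. We also use the $SU(3)$ rules $*_6\omega=\tfrac12\omega^2$, $*_6\Omega_\pm=\mp\Omega_\mp$ (with sign conventions fixed by the given $\psi$), $*_6(\theta\wedge\omega^2)$ proportional to $J\theta$, $*_6(\theta\wedge\Omega_+)$ proportional to $\iota_{\theta^\sharp}\Omega_-$, and $*_6F_0^{1,1}=-F_0^{1,1}\wedge\omega$. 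This produces the listed formula for $\tau_3$.

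For the norms, $\operatorname{tr}T=\tfrac74\tau_0=\tfrac32\lambda$ is immediate. For $|T|^2$ we compute $|\tau_2|^2$ and $|\tau_3|^2$ with $|\omega|^2=3$, $|\Omega_+|^2=4$, $|J\theta|^2=|\theta|^2$, $|\iota_{\theta^\sharp}\Omega_\pm|^2=2|\theta|^2$, and $|\theta\wedge\omega|^2=2|\theta|^2$. The mixed terms vanish by type orthogonality: vertical versus horizontal pieces, $\Lambda^3_6$ versus $\Lambda^3_{12}$, and $\Omega_+$ versus $\omega$ pieces. The scalar curvature then follows from \eqref{eq.Scal.curvature}. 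There $12d^*\tau_1=\tfrac{10}3 d^*\theta$, where $d^*$ on $M$ agrees with $d^*$ on $N$ for basic forms because $h=1$ and the fibres are totally geodesic of constant length.

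The main obstacle is the $\Omega^5_7/\Omega^5_{14}$ split and the Hodge-star sign bookkeeping for $\tau_3$. Every coefficient ($\tfrac5{18}$, $\tfrac29$, $\tfrac19$, $\tfrac16$) depends on the orientation and sign conventions for $*_6$, $J$ and $\Omega_\pm$. I would fix these in the flat model: take $\eta=e^1$, $\omega=e^{23}+e^{45}+e^{67}$, and $\Omega_+$ from the remaining terms of $\phi$. I would verify each identity pointwise there before assembling.

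As a cross-check on the final quadratic expressions, the sum $\tfrac16\scal-\tfrac13|T|^2-\tfrac16(\operatorname{tr}T)^2$ should reproduce, after integrating away $d^*\theta$, the reduced functional stated in the introduction.
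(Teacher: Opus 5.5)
Your plan is correct and is essentially the paper's own proof. You compute $d\varphi$ and $d\psi$ from \eqref{Eq.structural-ansatz}, read off the torsion forms through the $G_2$ projections, and substitute into \eqref{eq:Trelations} and \eqref{eq.Scal.curvature}; your contractions $\tau_0=\frac17*(\varphi\wedge d\varphi)$, $\tau_1=\frac1{12}*(\varphi\wedge *d\varphi)$ and the $\Omega^5_7/\Omega^5_{14}$ split of $d\psi$ do yield $\frac{6\lambda}{7}$, $\frac{5}{18}\theta$ and the stated $\tau_2$, and your version is more explicit than the paper's.

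One sign to correct when you fix conventions. The given $\psi=\frac12\omega^2-\eta\wedge\Omega_-$ forces $*_6\Omega_+=\Omega_-$ and $*_6\Omega_-=-\Omega_+$, which is the reverse of your $\mp$. This is what produces $-\frac{6\lambda}{7}\Omega_+$ in $\tau_3$ and makes $\tau_3\perp\varphi$. Also, the stated $J\theta$ terms hold with $J\theta:=\theta^\sharp\lrcorner\omega$.
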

\begin{proof}
We use the standard decomposition of the torsion forms of a $G_2$-structure \eqref{eq: Fernandez dpsi}
and under the constant fiber-length ansatz, we have
\[
\varphi=\eta\wedge\omega+\Omega_+,
\qquad
*_\varphi\varphi=\frac{1}{2}\omega^2-\eta\wedge\Omega_-.
\]
Using the structure equations \eqref{Eq.structural-ansatz}, we obtain
\begin{align*}
d\varphi
&=
d\eta\wedge\omega-\eta\wedge d\omega+d\Omega_+ \\
&=
\lambda\omega^2
+
F_0^{1,1}\wedge\omega
-\frac{2}{3}\eta\wedge\theta\wedge\omega
-\eta\wedge\nu_3
+\theta\wedge\Omega_+,
\end{align*}
and
\begin{align*}
d(*_\varphi\varphi)
&=
d\left(\frac{1}{2}\omega^2-\eta\wedge\Omega_-\right)=
d\omega\wedge\omega-d\eta\wedge\Omega_-+\eta\wedge d\Omega_-.
\end{align*}
Since $\nu_3$ is primitive, $\nu_3\wedge\omega=0$. Moreover, since $F_0^{1,1}+\lambda\omega$ is of type $(1,1)$, we have
\[
(F_0^{1,1}+\lambda\omega)\wedge\Omega_-=0.
\]
Therefore
\[
d(*_\varphi\varphi)
=
\frac{2}{3}\theta\wedge\omega^2
+
\eta\wedge\theta\wedge\Omega_-.
\]
Using the standard $SU(3)$ identities (see \cite[Equation 13]{moroianu2008deformations})
\[
*_6F_0^{1,1}=-F_0^{1,1}\wedge\omega,
\qquad
\iota_{\theta^\sharp}\Omega_+\wedge\omega=-J\theta\wedge\Omega_-,
\]
together with the corresponding $G_2$ Hodge-star identities, we obtain
\begin{gather*}
\tau_0=\frac{6\lambda}{7},
\qquad
\tau_1=\frac{5}{18}\theta,
\qquad
\tau_2=
\frac{2}{9}\eta\wedge J\theta
+
\frac{1}{9}\iota_{\theta^\sharp}\Omega_+,
\\
\tau_3
=
\frac{8\lambda}{7}\eta\wedge\omega
-\frac{6\lambda}{7}\Omega_+
+\frac{1}{6}J\theta\wedge\omega
-\frac{1}{6}\eta\wedge\iota_{\theta^\sharp}\Omega_-
-*_6\nu_3
-\eta\wedge F_0^{1,1}.
\end{gather*}
The term $-\eta\wedge F_0^{1,1}$ contributes precisely the additional term
$F_0^{1,1}\wedge\omega
$ in $d\varphi$, because
\[
*_\varphi(-\eta\wedge F_0^{1,1})
=
-*_6F_0^{1,1}
=
F_0^{1,1}\wedge\omega.
\]
The norms of the torsion forms are
\begin{gather*}
|\tau_0|^2=\frac{36}{49}\lambda^2,
\qquad
|\tau_1|^2=\frac{25}{324}|\theta|^2,
\qquad
|\tau_2|^2=\frac{2}{27}|\theta|^2,\\
|\tau_3|^2
=
\frac{48}{7}\lambda^2
+
\frac{1}{9}|\theta|^2
+
|\nu_3|^2
+
|F_0^{1,1}|^2.
\end{gather*}
Here the cross terms vanish because the summands belong to mutually orthogonal $SU(3)$-irreducible components. Now, we wil use \eqref{eq:Trelations} for conclude
\begin{align}
\operatorname{Tr}(T_\varphi)
=&
\frac{7}{4}\cdot\frac{6\lambda}{7}
=
\frac{3}{2}\lambda.\\
|T_\varphi|^2
=&
\frac{15}{4}\lambda^2
+
\frac{35}{18}|\theta|^2
+
\frac{1}{2}|\nu_3|^2
+
\frac{1}{2}|F_0^{1,1}|^2.
\end{align}
Finally, using the scalar curvature formula \eqref{eq.Scal.curvature}, we obtain
\[
\mathrm{Scal}(g_\varphi)
=
\frac{10}{3}d^*\theta
-\frac{3}{2}\lambda^2
+\frac{20}{9}|\theta|^2
-\frac{1}{2}|\nu_3|^2
-\frac{1}{2}|F_0^{1,1}|^2.
\]
This proves the proposition.
\end{proof}
We now turn to the study of the $G_2$-Hilbert functional \eqref{Functional} restricted to curves of $G_2$-structures satisfying the constant fiber-length non-K\"ahler transverse ansatz. Under the assumptions of Proposition \ref{prop.bwansatz}, the $G_2$-Hilbert functional for $\mathbb S^1$-invariant $G_2$-structures is given by
\[
\mathcal F(\varphi)
=
2\pi\int_N
\left(
\frac{5}{9}\,d_N^*\theta
-\frac{15}{8}\lambda^2
-\frac{5}{18}|\theta|_{g_N}^2
-\frac14|\nu_3|_{g_N}^2
-\frac14|F_0^{1,1}|_{g_N}^2
\right)dV_{g_N}.
\]
Finally, if $N$ is compact without boundary, then
\[
\int_N d_N^*\theta\, dV_{g_N}=0,
\]
and therefore
\begin{equation}\label{eq.funct.theta}
\mathcal F(\varphi)
=
-2\pi\int_N
\left(
\frac{15}{8}\lambda^2
+\frac{5}{18}|\theta|_{g_N}^2
+\frac14|\nu_3|_{g_N}^2
+\frac14|F_0^{1,1}|_{g_N}^2
\right)dV_{g_N}.
\end{equation}

\begin{theorem}\label{prop.first variation.BW}
Let
$\varphi_t=\eta_t\wedge\omega_t+(\Omega_t)_+
$ be a smooth $1$--parameter family of $\mathbb S^1$--invariant $G_2$--structures satisfying the constant fiber-length non-K\"ahler transverse ansatz
\[
d\eta_t=F_t,
\qquad
F_t=(F_0^{1,1})_t+\lambda_t\omega_t,
\]
and
\[
d\Omega_t=\theta_t\wedge\Omega_t,
\qquad
d\omega_t=\frac{2}{3}\theta_t\wedge\omega_t+\nu_{3,t}.
\]
Assume that $N=M/\mathbb S^1$ is compact without boundary. Let $g_t:=g_{(\omega_t,\Omega_t)}.
$
Then the first variation of the reduced $G_2$--Hilbert functional \eqref{eq.funct.theta}, with respect to the variations
\[
k=\dot g_t\big|_{t=0},
\qquad
\dot\lambda=\frac{d}{dt}\Big|_{t=0}\lambda_t,
\qquad
\dot\theta=\frac{d}{dt}\Big|_{t=0}\theta_t,\quad
\dot\nu_3=\frac{d}{dt}\Big|_{t=0}\nu_{3,t},
\qquad
\dot F_0^{1,1}=\frac{d}{dt}\Big|_{t=0}(F_0^{1,1})_t,
\]
is given by
\begin{equation}\label{eq:first-variation-F-reduced}
\small
\begin{split}
\frac{d}{dt}\Big|_{0}\mathcal F(\varphi_t)
=
-2\pi\int_N
\Bigg[
&\frac{15}{4}\lambda\,\dot\lambda
+\frac{5}{9}\langle\theta,\dot\theta\rangle
+\frac12\langle\nu_3,\dot\nu_3\rangle
+\frac12\langle F_0^{1,1},\dot F_0^{1,1}\rangle
\\
&+
\left\langle
\left(
\frac{15}{16}\lambda^2
+\frac{5}{36}|\theta|^2
+\frac18|\nu_3|^2
+\frac18|F_0^{1,1}|^2
\right)g_N
-\frac{5}{18}\theta\otimes\theta
-\frac14(\nu_3\circ\nu_3)
-\frac14(F_0^{1,1}\circ F_0^{1,1}),
\,k
\right\rangle
\Bigg]dV_{g_N}.
\end{split}
\end{equation}
Here
$(\nu_3\circ\nu_3)_{ij}
=
\frac12(\nu_3)_{iab}(\nu_3)_{j}{}^{ab},
$
and
$(F_0^{1,1}\circ F_0^{1,1})_{ij}
=
(F_0^{1,1})_{ia}(F_0^{1,1})_{j}{}^{a}.
$

In addition, differentiating the closedness condition $dF_t=0$ gives
\[
d\dot F_0^{1,1}
+
d\dot\lambda\wedge\omega
+
d\lambda\wedge\dot\omega
+
\dot\lambda\,d\omega
+
\lambda\,d\dot\omega
=
0.
\]
\end{theorem}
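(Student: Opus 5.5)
We differentiate the reduced expression \eqref{eq.funct.theta} directly under the integral sign, treating $(g_t,\lambda_t,\theta_t,\nu_{3,t},(F_0^{1,1})_t)$ as the independent data. Write the integrand as $L\,dV_{g_N}$ with
\[
L=\frac{15}{8}\lambda^2+\frac{5}{18}|\theta|^2_{g}+\frac14|\nu_3|^2_{g}+\frac14|F_0^{1,1}|^2_{g}.
\]
Then
\[
\frac{d}{dt}\Big|_0\mathcal F=-2\pi\int_N\left(\dot L\,dV_{g_N}+L\,\frac{d}{dt}\Big|_0 dV_{g_t}\right).
\]
Here $\dot L$ has two parts. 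One comes from varying the forms with the metric frozen. The other comes from varying the metric that defines the pointwise norms.

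The first part is immediate. The derivatives of the four squared terms are $\frac{15}{4}\lambda\dot\lambda$, $\frac59\langle\theta,\dot\theta\rangle$, $\frac12\langle\nu_3,\dot\nu_3\rangle$ and $\frac12\langle F_0^{1,1},\dot F_0^{1,1}\rangle$. These give the first line of \eqref{eq:first-variation-F-reduced}.

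For the metric part we use the standard formulas. For a $p$-form $\alpha$,
\[
\frac{d}{dt}|\alpha|^2_{g_t}=-\frac{1}{(p-1)!}\,\alpha_{ia_2\cdots a_p}\alpha_j{}^{a_2\cdots a_p}k^{ij},
\qquad
\frac{d}{dt}dV_{g_t}=\frac12\tr_g k\,dV_g .
\]
Applying these with $p=1,2,3$ gives the metric derivatives $-\langle\theta\otimes\theta,k\rangle$, $-\langle F_0^{1,1}\circ F_0^{1,1},k\rangle$ and $-\langle\nu_3\circ\nu_3,k\rangle$. The normalizations of $\nu_3\circ\nu_3$ (with the $\frac12$) and $F_0^{1,1}\circ F_0^{1,1}$ match the ones fixed in the statement. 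Multiplying by the coefficients $\frac5{18},\frac14,\frac14$ produces the negative tensor terms. The volume variation contributes $\frac12 L\,\langle g,k\rangle$, which yields the coefficients $\frac{15}{16},\frac5{36},\frac18,\frac18$ in front of $g_N$. Collecting these terms inside $-2\pi\int_N$ gives \eqref{eq:first-variation-F-reduced}.

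The boundary term $\int_N d^*\theta\,dV$ was discarded before differentiating, because $N$ is closed. Along the family it stays zero for every $t$, by the divergence theorem for each $g_t$, so its derivative vanishes too.

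Finally, the closedness identity follows by differentiating \eqref{Eq.closed-F} along the family. Expand $dF_t=d(F_0^{1,1})_t+d\lambda_t\wedge\omega_t+\lambda_t\,d\omega_t=0$ and use the Leibniz rule and $[d,\partial_t]=0$. This gives exactly the stated linear constraint.

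The main obstacle is conceptual rather than computational. The quantities $\lambda,\theta,\nu_3,F_0^{1,1}$ are not independent of $(\omega,\Omega)$. For instance, $\theta$ and $\nu_3$ are determined by $d\omega$ and $d\Omega$, and the splitting of $F$ into types depends on the structure. We handle this by stating the variation formally in terms of the listed variations, leaving their mutual constraints (the differentiated compatibility condition and the structure equations) to be imposed afterwards, as the statement does.

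A second, minor point is that the pointwise norms depend only on $g$ and not on $J$. So varying the metric with the forms held fixed is legitimate, and no extra terms from $\dot J$ appear in this reduced form.
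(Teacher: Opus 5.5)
Your proposal is correct and follows essentially the same route as the paper. Both differentiate the integrand $A$ of \eqref{eq.funct.theta} under the integral, using the pointwise variations $2\langle\alpha,\dot\alpha\rangle-\langle\alpha\circ\alpha,k\rangle$ and $\frac{d}{dt}dV_{g_t}=\frac12\tr_g(k)\,dV_g$, and then rewrite $\frac12 A\tr_g(k)$ as $\langle\frac12 A\,g_N,k\rangle$. Your extra remarks on the divergence term vanishing for each $t$ and on obtaining the constraint by differentiating \eqref{Eq.closed-F} fill in points the paper leaves implicit.
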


\begin{proof}
Since $N$ is compact without boundary, the reduced functional is given by \eqref{eq.funct.theta}. 
Let $A$ a function defined as
\[
A
=
\frac{15}{8}\lambda^2
+
\frac{5}{18}|\theta|^2
+
\frac14|\nu_3|^2
+
\frac14|F_0^{1,1}|^2.
\]
We use the standard variation of the volume form
$\frac{d}{dt}\Big|_0 dV_{g_t}
=
\frac12\operatorname{tr}_g(k)dV_g.
$
Moreover,
\begin{align*}
\frac{d}{dt}\Big|_0|\theta_t|_{g_t}^2
=&
2\langle\theta,\dot\theta\rangle
-
\langle\theta\otimes\theta,k\rangle,\\
\frac{d}{dt}\Big|_0|\nu_{3,t}|_{g_t}^2
=&
2\langle\nu_3,\dot\nu_3\rangle
-
\langle\nu_3\circ\nu_3,k\rangle,\\
\frac{d}{dt}\Big|_0|(F_0^{1,1})_t|_{g_t}^2
=&
2\langle F_0^{1,1},\dot F_0^{1,1}\rangle
-
\langle F_0^{1,1}\circ F_0^{1,1},k\rangle.
\end{align*}
Therefore
\[
\begin{split}
\dot A
=
&\frac{15}{4}\lambda\,\dot\lambda
+\frac{5}{9}\langle\theta,\dot\theta\rangle
+\frac12\langle\nu_3,\dot\nu_3\rangle
+\frac12\langle F_0^{1,1},\dot F_0^{1,1}\rangle-\frac{5}{18}\langle\theta\otimes\theta,k\rangle
-\frac14\langle\nu_3\circ\nu_3,k\rangle
-\frac14\langle F_0^{1,1}\circ F_0^{1,1},k\rangle.
\end{split}
\]
Thus
\[
\frac{d}{dt}\Big|_0\mathcal F(\varphi_t)
=
-2\pi\int_N
\left(
\dot A+\frac12A\operatorname{tr}_g(k)
\right)dV_g.
\]
Since
\[
\frac12A\operatorname{tr}_g(k)
=
\left\langle \frac12A\,g_N,k\right\rangle,
\]
we obtain
\[
\begin{split}
\frac{d}{dt}\Big|_0\mathcal F(\varphi_t)
=
-2\pi\int_N
\Bigg[
&\frac{15}{4}\lambda\,\dot\lambda
+\frac{5}{9}\langle\theta,\dot\theta\rangle
+\frac12\langle\nu_3,\dot\nu_3\rangle
+\frac12\langle F_0^{1,1},\dot F_0^{1,1}\rangle
\\
&+
\left\langle
\frac12A\,g_N
-\frac{5}{18}\theta\otimes\theta
-\frac14(\nu_3\circ\nu_3)
-\frac14(F_0^{1,1}\circ F_0^{1,1}),
k
\right\rangle
\Bigg]dV_{g_N}.
\end{split}
\]
Substituting the definition of $A$ gives
\[
\frac12A\,g_N
=
\left(
\frac{15}{16}\lambda^2
+\frac{5}{36}|\theta|^2
+\frac18|\nu_3|^2
+\frac18|F_0^{1,1}|^2
\right)g_N,
\]
which proves \eqref{eq:first-variation-F-reduced}.
This completes the proof.
\end{proof}

\begin{cor}\label{cor.critical.points.nonKahler}
Let $N$ be a compact manifold without boundary, and let $\varphi$ be an $\mathbb S^1$-invariant $G_2$-structure satisfying the constant fiber-length non-K\"ahler transverse ansatz. Formally, if the reduced variables
\[
(\lambda,\theta,\nu_3,F_0^{1,1})
\]
are varied independently, then the only critical point of the reduced $G_2$--Hilbert functional is given by
\[
\lambda=0,
\qquad
\theta=0,
\qquad
\nu_3=0,
\qquad
F_0^{1,1}=0.
\]
Equivalently,
\[
d\eta=0,
\qquad
d\omega=0,
\qquad
d\Omega=0.
\]
Thus, the formal critical point corresponds to a flat connection over a transverse Calabi--Yau $SU(3)$-structure.
\end{cor}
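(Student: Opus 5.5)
The plan is to read the corollary directly off the first variation formula \eqref{eq:first-variation-F-reduced} of Theorem \ref{prop.first variation.BW}, treating $(\lambda,\theta,\nu_3,F_0^{1,1})$ as independent variables as the statement allows. First set $k=0$, i.e.\ hold the transverse metric $g_N$ fixed. Then vary each reduced variable separately, with $\dot\lambda$, $\dot\theta$, $\dot\nu_3$, $\dot F_0^{1,1}$ arbitrary smooth sections of the appropriate bundles. This formally sets aside the linearized closedness constraint recorded at the end of the theorem.

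Vanishing of the first variation for all $\dot\lambda$ gives $\int_N \tfrac{15}{4}\lambda\dot\lambda\,dV_{g_N}=0$; taking $\dot\lambda=\lambda$ yields $\lambda\equiv 0$. Similarly, taking $\dot\theta=\theta$, $\dot\nu_3=\nu_3$ and $\dot F_0^{1,1}=F_0^{1,1}$ gives $\int_N|\theta|^2=\int_N|\nu_3|^2=\int_N|F_0^{1,1}|^2=0$, so all three vanish identically. Conversely, at this point every term of \eqref{eq:first-variation-F-reduced} is linear or quadratic in the reduced variables, including the coefficient of $k$. Hence the variation vanishes for every $k$ as well, and the configuration is indeed critical.

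An alternative that sidesteps the variation formula is to note that the reduced functional \eqref{eq.funct.theta} is $-2\pi\int_N A\,dV_{g_N}$ with $A\ge 0$ a positive-definite quadratic form in the reduced variables. Along the ray $t\mapsto (t\lambda,t\theta,t\nu_3,tF_0^{1,1})$ the functional scales like $t^2$, so its derivative at $t=1$ is twice its value. Criticality therefore forces $\int_N A=0$, hence $A\equiv 0$. I would include this as the cleanest justification.

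For the equivalent formulation, substitute into \eqref{Eq.structural-ansatz}. With $\theta=0$ we get $d\Omega=0$. With $\theta=\nu_3=0$ we get $d\omega=0$. With $\lambda=0$ and $F_0^{1,1}=0$ we get $d\eta=F=0$; the $(2,0)+(0,2)$ part was already set to zero in \eqref{Eq.F-11}. This is consistent with the compatibility condition \eqref{Eq.Compatibility}, which holds trivially. So $(\omega,\Omega)$ is torsion-free, i.e.\ Calabi--Yau, and $\eta$ is flat.

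The main obstacle is justifying independence of the variations, since the actual variables are coupled through $dF=0$ and through $\theta,\nu_3$ being determined by $(\omega,\Omega)$. The statement explicitly says ``formally'', so I would flag this caveat rather than resolve it. The scaling argument makes the conclusion robust whenever the rescaled ray stays admissible.
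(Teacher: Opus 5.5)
Your proposal is correct and follows the same route as the paper. The paper gives no separate proof; the corollary is meant to be read off the first variation formula \eqref{eq:first-variation-F-reduced}, where independent variations in $\lambda,\theta,\nu_3,F_0^{1,1}$ force $\lambda=0$, $\theta=0$, $\nu_3=0$, $F_0^{1,1}=0$, and substituting into \eqref{Eq.structural-ansatz} then gives $d\eta=d\omega=d\Omega=0$. Your test choices $\dot\lambda=\lambda$ etc.\ (equivalently, the degree-two homogeneity argument) and your check that the $k$-coefficient vanishes at this point simply make that argument explicit. For the converse direction of ``Equivalently'', it suffices to note additionally that $\theta\wedge\Omega=0$ forces $\theta=0$.
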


The constant fiber-length non-K\"{a}hler transverse ansatz restricts the tangent space of $\mathbb{S}^1$-invariant $G_2$-structures by imposing constraints on the tuple $(\dot h, \dot\eta, \dot\omega, \dot\Omega_+)$. The condition $h=1$ implies $\dot h = 0$, ensuring that the induced metric $g_\varphi$ preserves the orthogonal splitting
\begin{equation}
    g_\varphi = \eta^2 + \pi^*g_N,
\end{equation}
where the vertical and horizontal components are determined by the connection $\eta$ and the transverse metric $g_N$, respectively. In this framework, the transverse $SU(3)$-structure is not assumed to be K\"{a}hler; instead, it satisfies the structure equations
\begin{equation}
    d\Omega = \theta \wedge \Omega, \qquad d\omega = \frac{2}{3}\theta \wedge \omega + \nu_3,
\end{equation}
where $\theta$ is a basic $1$-form and $\nu_3$ is the primitive torsion component of type $W_3$. The curvature of the connection is given by $d\eta = F_0^{1,1} + \lambda\omega$, allowing for transverse metric variations that preserve both the torsion class and the curvature decomposition. Differentiating the curvature condition yields the linearized constraints:
\begin{align}
    d\dot\eta &= \dot F_0^{1,1} + \dot\lambda\omega + \lambda\dot\omega, \\
    d\dot\Omega &= \dot\theta \wedge \Omega + \theta \wedge \dot\Omega, \\
    d\dot\omega &= \frac{2}{3}(\dot\theta \wedge \omega + \theta \wedge \dot\omega) + \dot\nu_3.
\end{align}
Furthermore, the closedness condition $dF=0$ (where $F = F_0^{1,1} + \lambda\omega$) must be maintained, requiring:
\begin{equation}
    d\dot F_0^{1,1} + d\dot\lambda \wedge \omega + d\lambda \wedge \dot\omega + \dot\lambda d\omega + \lambda d\dot\omega = 0.
\end{equation}

In summary, this ansatz defines a constrained variation space where the reduced data are captured by the tuple $(g_N, \theta, \lambda, \nu_3, F_0^{1,1})$. An admissible infinitesimal variation is represented by the vector $V = (k, \beta, f, \mu, \rho)$, where $k=\dot g$, $\beta=\dot\theta$, $f=\dot\lambda$, $\mu=\dot\nu_3$, and $\rho=\dot F_0^{1,1}$. To define the gradient flow for the reduced functional $\mathcal{F}(g_N, \theta, \lambda, \nu_3, F_0^{1,1})$, we endow the space of variations with the natural $L^2$-pairing:
\begin{equation}\label{Eq.MetricL2}
    \langle V_1, V_2 \rangle_{L^2} = \int_N \left( \langle k_1, k_2 \rangle_g + \langle \beta_1, \beta_2 \rangle_g + f_1f_2 + \langle \mu_1, \mu_2 \rangle_g + \langle \rho_1, \rho_2 \rangle_g \right) dV_g.
\end{equation}
The formal gradient $\nabla\mathcal{F}$ is then the unique element satisfying:
\begin{equation}
    \frac{d}{dt}\bigg|_{t=0} \mathcal{F} = \left\langle \nabla\mathcal{F}, V \right\rangle_{L^2}
\end{equation}
for every admissible variation $V$.

\begin{theorem}\label{Th.Grad.flow_W345}
Let $\mathcal{F}$ be the reduced $G_2$-Hilbert functional within the constant fiber-length non-K\"{a}hler transverse ansatz \eqref{Eq.structural-ansatz}
With respect to the $L^2$-pairing \eqref{Eq.MetricL2}, the positive gradient flow is given by the coupled system
\begin{equation}\label{Eq.sistem_W345}
\begin{cases}
\displaystyle \partial_t g = -2\pi\left[ \left( \frac{15}{16}\lambda^2 + \frac{5}{36}|\theta|^2 + \frac{1}{8}|\nu_3|^2 + \frac{1}{8}|F_0^{1,1}|^2 \right)g -\frac{5}{18}\theta\otimes\theta -\frac{1}{4}(\nu_3\circ\nu_3) -\frac{1}{4}(F_0^{1,1}\circ F_0^{1,1}) \right], \\[1.5ex]
\displaystyle \partial_t\theta = -\frac{10\pi}{9}\theta, \\[1.5ex]
\displaystyle \partial_t\lambda = -\frac{15\pi}{2}\lambda, \\[1.5ex]
\displaystyle \partial_t\nu_3 = -\pi\nu_3, \\[1.5ex]
\displaystyle \partial_t F_0^{1,1} = -\pi F_0^{1,1}.
\end{cases}
\end{equation}
Here, the composition terms are defined as $(\nu_3\circ\nu_3)_{ij} = \frac{1}{2}(\nu_3)_{iab}(\nu_3)_j{}^{ab}$ and $(F_0^{1,1}\circ F_0^{1,1})_{ij} = (F_0^{1,1})_{ia}(F_0^{1,1})_j{}^{a}$.

For initial data $(g_0,\theta_0,\lambda_0,\nu_{3,0},F_{0,0}^{1,1})$, the Lee form, the trace curvature component, and the primitive torsion and curvature components evolve exponentially:
\begin{equation}\label{eq.res.theta.lambda.nu.F}
\theta(t) = e^{-\frac{10\pi}{9}t}\theta_0, \quad \lambda(t) = e^{-\frac{15\pi}{2}t}\lambda_0, \quad \nu_3(t) = e^{-\pi t}\nu_{3,0}, \quad F_0^{1,1}(t) = e^{-\pi t}F_{0,0}^{1,1}.
\end{equation}
Consequently, the evolution of the metric $g(t)$ is governed by
\begin{equation}\label{Eq.metric-flow-expanded_W345}
\begin{split}
\partial_t g = -2\pi\bigg[ & \left( \frac{15}{16}e^{-15\pi t}\lambda_0^2 + \frac{5}{36}e^{-\frac{20\pi}{9}t}|\theta_0|_{g(t)}^2 + \frac{1}{8} e^{-2\pi t}|\nu_{3,0}|_{g(t)}^2 + \frac{1}{8} e^{-2\pi t}|F_{0,0}^{1,1}|_{g(t)}^2 \right)g(t) \\
&- \frac{5}{18}e^{-\frac{20\pi}{9}t}\theta_0\otimes\theta_0 - \frac{1}{4} e^{-2\pi t}\bigl(\nu_{3,0}\circ_{g(t)}\nu_{3,0}\bigr) - \frac{1}{4} e^{-2\pi t}\bigl(F_{0,0}^{1,1}\circ_{g(t)}F_{0,0}^{1,1}\bigr) \bigg].
\end{split}
\end{equation}
\end{theorem}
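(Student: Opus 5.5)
The plan is to read the gradient off directly from the first variation formula \eqref{eq:first-variation-F-reduced} of Theorem \ref{prop.first variation.BW}, using the $L^2$-pairing \eqref{Eq.MetricL2}. Then I would integrate the decoupled equations for the torsion and curvature components.

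\textbf{Identifying the gradient.} Write a variation as $V=(k,\beta,f,\mu,\rho)$ with $\beta=\dot\theta$, $f=\dot\lambda$, $\mu=\dot\nu_3$, $\rho=\dot F_0^{1,1}$. The integrand of \eqref{eq:first-variation-F-reduced} is already a sum of pointwise pairings of each variation component with a fixed tensor.
\begin{itemize}
\item The scalar part is $\frac{15}{4}\lambda f$.
\item The $1$-form part is $\frac59\langle\theta,\beta\rangle$.
\item The $3$-form and $2$-form parts are $\frac12\langle\nu_3,\mu\rangle$ and $\frac12\langle F_0^{1,1},\rho\rangle$.
\item The symmetric $2$-tensor part is $\langle E,k\rangle$, where $E=\frac12A\,g_N-\frac5{18}\theta\otimes\theta-\frac14\nu_3\circ\nu_3-\frac14F_0^{1,1}\circ F_0^{1,1}$. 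Here $E$ is symmetric, so pairing it with $k$ is legitimate.
\end{itemize}
I would treat the reduced variables as varying independently, exactly as in the formal setting of Corollary \ref{cor.critical.points.nonKahler}. Comparing with \eqref{Eq.MetricL2} then gives
\[
\nabla\mathcal F=-2\pi\Bigl(E,\ \tfrac59\theta,\ \tfrac{15}{4}\lambda,\ \tfrac12\nu_3,\ \tfrac12F_0^{1,1}\Bigr).
\]
Setting $\partial_t(g,\theta,\lambda,\nu_3,F_0^{1,1})=\nabla\mathcal F$ reproduces \eqref{Eq.sistem_W345}. The coefficients check out: $2\pi\cdot\frac59=\frac{10\pi}9$, $2\pi\cdot\frac{15}4=\frac{15\pi}2$, and $2\pi\cdot\frac12=\pi$.

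\textbf{Solving the component equations.} The equations for $\theta,\lambda,\nu_3,F_0^{1,1}$ are linear ODEs with constant coefficients, pointwise on $N$, with no dependence on $g$. Here $\theta$, $\nu_3$ and $F_0^{1,1}$ are regarded as forms, so their evolution involves no index raising. Integrating each one gives \eqref{eq.res.theta.lambda.nu.F}.

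\textbf{The metric equation.} Substitute these solutions into the $g$-equation, squaring the exponentials:
\begin{itemize}
\item $\lambda^2$ carries $e^{-15\pi t}$;
\item $|\theta|^2$ and $\theta\otimes\theta$ carry $e^{-20\pi t/9}$;
\item the $\nu_3$ and $F_0^{1,1}$ terms carry $e^{-2\pi t}$.
\end{itemize}
The norms and the contractions $\circ$ are still taken with respect to $g(t)$. This yields \eqref{Eq.metric-flow-expanded_W345}.

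\textbf{Main obstacle.} The delicate step is justifying the identification of the gradient, since the admissible variations are constrained. They must satisfy the linearized structure equations and the linearized closedness condition from Theorem \ref{prop.first variation.BW}, so the components of $V$ are not genuinely independent. A true gradient would be the projection of the vector above onto the admissible subspace. I would handle this as the paper does: state explicitly that the gradient is formal, computed by treating the reduced variables as independent coordinates. I would also note that the inner products on forms in \eqref{Eq.MetricL2} must match the normalization used for $|\nu_3|^2$ and $|F_0^{1,1}|^2$ in \eqref{eq.funct.theta}; otherwise the coefficients $\frac12$ would shift.
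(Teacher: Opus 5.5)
Your proposal is correct and follows essentially the same argument as the paper. The paper's proof reads the gradient off the first-variation formula of Theorem~\ref{prop.first variation.BW} via the $L^2$-pairing \eqref{Eq.MetricL2}, formally treating the reduced variables as independent; the resulting linear equations for $\theta,\lambda,\nu_3,F_0^{1,1}$ then integrate to exponentials, which are substituted into the metric equation, exactly as you do. Your remark that the admissible variations are constrained, so that this is only a formal gradient (a genuine one would require projecting onto the admissible subspace), makes explicit a point the paper leaves implicit.
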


\begin{proof}
It fllows of Theorem~\ref{prop.first variation.BW} and using the $L^2$-norm \eqref{Eq.MetricL2}. 
\end{proof}

\begin{cor}
Along the positive gradient flow of the reduced $G_2$--Hilbert functional within the constant fiber-length non-K\"ahler transverse ansatz, the Lee form, the trace curvature component, the primitive torsion component, and the primitive curvature component evolve according to \eqref{eq.res.theta.lambda.nu.F}. Consequently, the tuple
\[
\left(\theta(t),\lambda(t),\nu_3(t),F_0^{1,1}(t)\right)
\]
decays exponentially to zero as $t\to+\infty$. In this limit, the flow drives the ansatz toward the rigid conditions
\begin{equation}
    d\eta=0,
    \qquad
    d\Omega=0,
    \qquad
    d\omega=0.
\end{equation}
Furthermore, the trace of the intrinsic torsion tensor is given by
\[
\operatorname{Tr}(T_{\varphi(t)})=\frac32\lambda(t),
\]
and therefore vanishes asymptotically as the flow approaches its formal limit.
\end{cor}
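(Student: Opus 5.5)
The plan is to read everything off Theorem \ref{Th.Grad.flow_W345} and Proposition \ref{prop.bwansatz}; no new variational computation is needed. First, the evolution equations for $\theta,\lambda,\nu_3,F_0^{1,1}$ in the system \eqref{Eq.sistem_W345} are linear, decoupled and have constant coefficients, pointwise on $N$. Their right-hand sides do not involve $g(t)$, so each equation integrates explicitly, which gives \eqref{eq.res.theta.lambda.nu.F}. This is exactly the first assertion of the corollary.

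For exponential decay to zero I would measure the forms in a fixed background metric, say $g_0$. Since $N$ is compact, $\|\theta(t)\|_{C^0(g_0)}=e^{-\frac{10\pi}{9}t}\|\theta_0\|_{C^0(g_0)}$, and the same holds for $\lambda,\nu_3,F_0^{1,1}$ with rates $\frac{15\pi}{2},\pi,\pi$. Hence the tuple tends to zero uniformly, at rate at least $e^{-\pi t}$.

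The step I expect to be the main obstacle is a norm taken with respect to the evolving metric $g(t)$. That quantity tends to zero only if $g(t)$ stays uniformly equivalent to $g_0$. To check this, I would use \eqref{Eq.metric-flow-expanded_W345}. Its right-hand side is quadratic in the decaying forms, so one expects $|\partial_t g|_{g}\le C e^{-2\pi t}\,\Phi(g)$, where $\Phi(g)$ is controlled by the equivalence constants of $g$ and $g_0$. A Gronwall/continuity argument on the maximal time on which $\tfrac12 g_0\le g(t)\le 2g_0$ then gives integrability of $\partial_t g$, provided the initial data are small. For general data I would state the decay formally, that is, in the fixed metric $g_0$, as the corollary's phrase ``formal limit'' suggests.

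In the limit $\theta=\lambda=\nu_3=F_0^{1,1}=0$, the structural ansatz \eqref{Eq.structural-ansatz} gives $d\Omega=\theta\wedge\Omega=0$, $d\omega=\tfrac23\theta\wedge\omega+\nu_3=0$, and $d\eta=F_0^{1,1}+\lambda\omega=0$. This is the same conclusion as in Corollary \ref{cor.critical.points.nonKahler}. Finally, the trace formula $\operatorname{Tr}(T_{\varphi(t)})=\frac32\lambda(t)$ is Proposition \ref{prop.bwansatz} applied at each time, since $\varphi(t)$ stays in the ansatz. Because $\lambda(t)=e^{-\frac{15\pi}{2}t}\lambda_0$, the trace vanishes asymptotically.
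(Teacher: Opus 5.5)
Your proposal is correct and follows the paper's route. The paper gives no separate argument: it reads the corollary directly off the explicit solutions \eqref{eq.res.theta.lambda.nu.F} of Theorem~\ref{Th.Grad.flow_W345}, the structural ansatz \eqref{Eq.structural-ansatz}, and the trace formula of Proposition~\ref{prop.bwansatz}. Your added uniform-equivalence argument for $g(t)$, with its small-data caveat, is a genuine refinement that the paper omits, since the paper treats the decay formally and never addresses long-time existence of the metric equation.
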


\begin{remark}
The evolution equation for $g(t)$ is not explicitly integrable in general, as the norms $|\theta(t)|_{g(t)}^2$, $|\nu_3(t)|_{g(t)}^2$, and $|F_0^{1,1}(t)|_{g(t)}^2$, along with the symmetric tensors $\nu_3(t)\circ_{g(t)}\nu_3(t)$ and $F_0^{1,1}(t)\circ_{g(t)} F_0^{1,1}(t)$, depend nonlinearly on the evolving metric through index contractions. Consequently, despite the explicit exponential time dependence of the forms $\theta(t)$, $\nu_3(t)$, $F_0^{1,1}(t)$ and the function $\lambda(t)$, the metric equation remains a fundamentally nonlinear and coupled evolution problem.
\end{remark}

\subsection{Gibbons--Hawking-Type Ansatz}

Over the ansatz \eqref{Eq.G2-ansatz}, we assume that $\varphi$ is closed. Since $\mathcal{L}_\xi\varphi=0$ and $d\varphi=0$, Cartan's formula implies
\[
d(\iota_\xi\varphi)=0.
\]
In particular, the $2$--form determined by the contraction of $\varphi$ with the vector field generating the $\mathbb S^1$--action is closed. Therefore,
\begin{equation}
\begin{split}
d\omega &= 0,\\
d\Omega_+
&=
-\frac{3}{4}h^{-1}dh\wedge\Omega_+
-
h^{-\frac{3}{4}}d\eta\wedge\omega.
\end{split}
\end{equation}
Since $\varphi\wedge\omega$ is closed, it follows that
$d\eta\wedge\omega^2=0.$
Also, we have that
$dV_{\varphi}=\frac{1}{6}h\,\eta\wedge\omega^3.$
Note that if $\varphi$ is a closed $G_2$--structure, then, in the standard torsion decomposition, the torsion forms $\tau_0$, $\tau_1$, and $\tau_3$ vanish, and the only possible non-vanishing torsion component is
$$\tau_2=\tau\in\Omega^2_{14}\simeq\mathfrak g_2.$$

In \cite{Bryant2006}, it is shown that closed $G_2$--structures satisfy the following identities.
\begin{align}
    d\ast_{\varphi}\varphi=&\tau_2\wedge\varphi\\
    d\tau_2=&\frac{1}{7}|\tau_2|^2\varphi+\gamma,\quad\gamma\in\Omega^3_{27}\\
    \Ric(g_{\varphi})=&\frac{1}{4}|\tau_2|^2g_{\varphi}-\frac{1}{4}j(d\tau_2-\frac{1}{2}\ast_{\varphi}(\tau_2\wedge\tau_2))\\
    \scal(g_{\varphi})=&-\frac{1}{2}|\tau_2|^2.
\end{align}

Now, in \cite{gianniotis2025} is defined the $G_2$-Hilbert functional for $G_2$--structures as
\begin{equation}
\begin{matrix}
 \cF: &\Omega^3_+(M) &\rightarrow&\R\\
   & \varphi&\longmapsto&\cF(\varphi)=\int_M\Big(
\frac{1}{6}\scal-\frac{1}{3}|T|^2-\frac{1}{6}(\tr T)^2\Big)dV_{g_{\varphi}}
\end{matrix}
\end{equation}

\begin{prop}
Under the Gibbons-Hawkins assumptions , the Einstein--Hilbert functional
\[
\mathcal \cF(\varphi):=\frac{3}{2}\int_M \scal(g_\varphi)\,\vol_\varphi
\]
satisfies
\[
\mathcal \cF(\varphi)\le 0.
\]
Moreover, the following are equivalent:
\begin{enumerate}
    \item $\mathcal \cF(\varphi)=0$;
    \item
    $
    d_\omega^*(h^{1/2}\Omega_+)=0,
    \qquad
    d^c(h^{-1})=h^{-2}J\gamma_6^1;
    $
    \item the torsion $2$-form $\tau$ of $\varphi$ vanishes;
    \item $\varphi$ is torsion-free.
\end{enumerate}
In particular, torsion-free $\mathbb{S}^1$-invariant closed $G_2$-structures are global maximizers of $\cF(\varphi)$.
\end{prop}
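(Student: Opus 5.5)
The plan is to use the identity $\scal(g_\varphi)=-\tfrac12|\tau_2|^2$ for closed $G_2$-structures, quoted above from \cite{Bryant2006}. It immediately gives
\[
\cF(\varphi)=\tfrac32\int_M\scal(g_\varphi)\,\vol_\varphi=-\tfrac34\int_M|\tau|^2_{g_\varphi}\,\vol_\varphi\le 0 .
\]
This settles the inequality. It also settles the equivalence (1)$\Leftrightarrow$(3), since the integrand is pointwise nonnegative and continuous, so the integral vanishes iff $\tau\equiv0$.

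For (3)$\Leftrightarrow$(4), recall that a closed $G_2$-structure already has $\tau_0=\tau_1=\tau_3=0$. Hence $\tau_2=\tau=0$ is exactly torsion-freeness, i.e. $d\varphi=0=d\ast_\varphi\varphi$. The final claim then follows: since $\cF\le0$ on the class and $\cF=0$ at torsion-free structures, these are global maximizers.

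The substantive step is (2)$\Leftrightarrow$(4), and this will be the main obstacle. Closedness of $\varphi$ is built into the ansatz, so torsion-freeness reduces to $d(\ast_\varphi\varphi)=0$. Using \eqref{Eq.G2-ansatz},
\[
d\ast_\varphi\varphi=\tfrac12 dh\wedge\omega^2+h\,d\omega\wedge\omega-d\eta\wedge h^{1/4}\Omega_-+\eta\wedge d(h^{1/4}\Omega_-).
\]
I would use $d\omega=0$ and split this into its horizontal part and its $\eta\wedge(\cdot)$ part.

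The $\eta$-part gives $d(h^{1/4}\Omega_-)=0$. Writing $\ast_\omega\Omega_+=\Omega_-$, this condition reads $d\ast_\omega(h^{1/4}\Omega_+)=0$, up to adjusting the conformal weight. I expect the correct power to be $h^{1/2}$ once the weights of the metric $h^{1/2}g$ are accounted for, which gives $d^*_\omega(h^{1/2}\Omega_+)=0$.

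The horizontal part, $\tfrac12dh\wedge\omega^2=h^{1/4}d\eta\wedge\Omega_-$, determines the $\Lambda^2_6$ component of $d\eta$ in terms of $dh$. Here I take $\gamma_6^1$ to be the $1$-form encoding $(d\eta)^{2,0+0,2}$ via $\Lambda^2_6\cong\Lambda^1$. Applying $\ast_\omega$ and the identity $\alpha\wedge\Omega_-\leftrightarrow J\alpha\wedge\omega^2$ rewrites the horizontal equation as $d^c(h^{-1})=h^{-2}J\gamma_6^1$. The constraint $d\eta\wedge\omega^2=0$ kills the trace part, so no further condition arises.

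The delicate points are the exact powers of $h$ and the signs in the $SU(3)$ identities. I would check these against the expression for $d\Omega_+$ derived just above, and against the case $h$ constant, where the conditions should reduce to $(\omega,\Omega)$ being Calabi--Yau with $d\eta\in\Lambda^2_8$.
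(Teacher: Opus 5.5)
Your treatment of the inequality, of (1)$\Leftrightarrow$(3)$\Leftrightarrow$(4), and of the maximizer claim matches the paper's: everything rests on $\scal(g_\varphi)=-\tfrac12|\tau|^2$ for closed $\varphi$. For (2) the paper does not compute $d\ast_\varphi\varphi$ at all. It imports from \cite{fowdar} the splitting $\tau=\tau_h+\eta\wedge\tau_v$, the norm formula $|\tau|^2=h^{-2}|\tau_h|^2+h|\tau_v|^2$, and the expressions $\tau_h=d^*_\omega(h^{1/2}\Omega_+)$, $\tau_v=2d^c(h^{-1})-2h^{-2}J\gamma^1_6$. With these, (2) is by definition the statement $\tau_h=\tau_v=0$. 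Your direct route is a sensible alternative, and your splitting is correct: with $d\omega=0$, torsion-freeness is equivalent to $d(h^{1/4}\Omega_-)=0$ together with $\tfrac12dh\wedge\omega^2=h^{1/4}d\eta\wedge\Omega_-$. The $h^{1/4}$ in the horizontal equation can be absorbed into the normalization of $\gamma^1_6$, which neither you nor the paper fixes, so that half is a matter of convention.

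The gap is the step where you ``adjust the conformal weight'' to pass from $d(h^{1/4}\Omega_-)=0$ to $d^*_\omega(h^{1/2}\Omega_+)=0$. There is no freedom there. $d^*_\omega$ is the codifferential of the fixed metric $g_\omega$, and $\ast_\omega\Omega_+=\pm\Omega_-$, so the first condition in (2) says $d(h^{1/2}\Omega_-)=0$. But
\[
d(h^{1/2}\Omega_-)=h^{1/4}\,d(h^{1/4}\Omega_-)+\tfrac14h^{-1/2}\,dh\wedge\Omega_- ,
\]
and $\alpha\mapsto\alpha\wedge\Omega_-$ is injective on $1$-forms, so the two conditions agree only where $dh=0$. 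This is not bookkeeping. Locally, a Gibbons--Hawking space times $\mathbb{R}^3$ (circle acting on the fibres) is torsion-free with $h=V$ a nonconstant harmonic function. There $d(h^{1/4}\Omega_-)=0$ but $d(h^{1/2}\Omega_-)\neq0$. Indeed, pushing your computation through $\tau=-\ast_\varphi d\ast_\varphi\varphi$ gives $\tau_h=\mp\ast_\omega d(h^{1/4}\Omega_-)$, which is in tension with the exponent in the formula the paper quotes.

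So the missing ingredient is a proof that $h$ is constant. This needs compactness of $M$, which is implicit anyway for $\cF$ to be defined. For (4)$\Rightarrow$(2): a torsion-free $\varphi$ is Ricci-flat, so by Bochner the Killing field $\xi$ is parallel and $h=|\xi|^{-2}$ is constant. For (2)$\Rightarrow$(4), read the second condition of (2) as your horizontal equation. Then $d\ast_\varphi\varphi=-\tfrac14h^{-3/4}\eta\wedge dh\wedge\Omega_-$, whose Hodge dual is the horizontal $\Lambda^2_6$-form $\ast_\omega(dh\wedge\Omega_-)$. Its wedge with $\ast_\varphi\varphi$ is nonzero unless $dh=0$. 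Closedness forces $\tau_1=0$, i.e. $\ast_\varphi d\ast_\varphi\varphi\in\Omega^2_{14}$, which kills that wedge, so $dh=0$ and hence $d\ast_\varphi\varphi=0$.
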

\begin{proof}
    In \cite{fowdar}, they express the torsion form as 
$$\tau_2=\tau_h+\eta\wedge\tau_v$$
for a $2$-form $\tau_h$ and $1$-form $\tau_v$ which are both basic, i.e. they are horizontal and $\mathbb{S}_{\xi}^1$-invariant. So, the norm of the tensor torsion is 
$$|\tau_2|^2_{\varphi}=h^{-2}|\tau_h|_\omega^2+h|\tau_v|_\omega^2.$$
 
As $\tau \in \Lambda_{14}^2$ it follows that

$$
\begin{aligned}
\tau_h \wedge \omega^2 & =0 \\
\tau_v \wedge \frac{1}{2} h^{3 / 2} \omega^2 & =\tau_h \wedge \Omega^{-}
\end{aligned}
$$
and also it was shown that 
\[
\tau_h=d_\omega^*(h^{1/2}\Omega_+),
\qquad
\tau_v=2\,d^c(h^{-1})-2h^{-2}J\gamma_6^1,
\]

Hence
\[
\scal(g_\varphi)
=
-\frac12\left(
h^{-2}\big|d_\omega^*(h^{1/2}\Omega_+)\big|_{g_\omega}^2
+
h\big|2\,d^c(h^{-1})-2h^{-2}J\gamma_6^1\big|_{g_\omega}^2
\right).
\]
Therefore, the proof is complete.
\end{proof}

\begin{theorem}
Let $M^7$ be endowed with a free $\mathbb S^1$--action generated by a vector field $\xi$, and let
\[
\pi:M\to N:=M/\mathbb S^1
\]
be the quotient map. Assume that $\varphi$ is a closed $\mathbb S^1$--invariant $G_2$--structure on $M$ of the form 
\begin{equation}\label{Eq.GHansatz}
  \varphi=\eta\wedge \pi^*\omega+h^{3/4}\pi^*(\Omega_+),  
\end{equation}
where $h>0$ is basic, $\eta$ is an $\mathbb S^1$--invariant connection $1$--form, and $(\omega,\Omega)$ is an $SU(3)$--structure on $N$, with $\Omega_+=\Re\Omega$. Then
\[
g_\varphi=h^{-1}\eta^2+h^{1/2}g_\omega,
\qquad
dV_{g_\varphi}=h\,\eta\wedge dV_{g_\omega},
\]
and
\[
\mathcal{F}(\varphi)
=
2\pi\int_N
\left(
\frac12 h^{1/2}\,\scal(g_\omega)
-\frac32\,\Delta_{g_\omega}(h^{1/2})
-\frac18 h^{-1}|d\eta|_{g_\omega}^2
\right)dV_{g_\omega}.
\]
If moreover $N$ is compact without boundary, then
\begin{equation}\label{eq.GHfunctional}
   \mathcal{F}(\varphi)
=
2\pi\int_N
\left(
\frac12 h^{1/2}\,\scal(g_\omega)
-\frac18 h^{-1}|d\eta|_{g_\omega}^2
\right)dV_{g_\omega}. 
\end{equation}
\end{theorem}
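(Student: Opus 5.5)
The plan is to compute the integrand of $\mathcal F$ from the closed-structure identities and then integrate along the fibres. First, the metric and volume form follow from \eqref{Eq.G2-ansatz}. Indeed, $g_\varphi=h^{-1}\eta^2+h^{1/2}g_\omega$. Hence $\sqrt{\det g_\varphi}=h^{-1/2}\cdot (h^{1/2})^{3}\sqrt{\det g_\omega}=h\sqrt{\det g_\omega}$, which gives $dV_{g_\varphi}=h\,\eta\wedge dV_{g_\omega}$. This agrees with $\frac16 h\,\eta\wedge\omega^3$ noted above.

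Since $\varphi$ is closed, $\tau_0=\tau_1=\tau_3=0$. By \eqref{Eq:Torsion} we have $T=-\frac12\tau_2$, so $\tr T=0$ and $|T|^2=\frac12|\tau_2|^2$ by \eqref{eq:Trelations}. The integrand therefore becomes $\frac16\scal-\frac16|\tau_2|^2$. I will not use $\scal=-\frac12|\tau_2|^2$ to eliminate $\scal$. Instead I will eliminate $|\tau_2|^2=-2\scal$, so the integrand equals $\frac12\scal(g_\varphi)$. This is the form that reproduces the coefficient $\frac12$ in front of $\scal(g_\omega)$.

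The main step is the O'Neill/Kaluza--Klein formula for $\scal(g_\varphi)$. I would write $g_\varphi=e^{2u}\bigl(h^{-3/2}\eta^2+g_\omega\bigr)$ with $e^{2u}=h^{1/2}$. Alternatively, I would apply the warped Kaluza--Klein formula directly to a Riemannian submersion with fibre length $f=h^{-1/2}$, horizontal metric $\tilde g=h^{1/2}g_\omega$ and curvature $d\eta$:
\[
\scal(g_\varphi)=\scal(\tilde g)-\tfrac14 f^2|d\eta|^2_{\tilde g}-2f^{-1}\tilde\Delta f ,
\]
with the sign convention $\tilde\Delta=\mathrm{tr}\nabla^2$. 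After that I would use the 6-dimensional conformal change formula for $\scal(h^{1/2}g_\omega)$, together with $|d\eta|^2_{\tilde g}=h^{-1}|d\eta|^2_{g_\omega}$ and $dV_{\tilde g}=h^{3/2}dV_{g_\omega}$.

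Multiplying by $h$ and integrating over the fibre contributes the factor $2\pi$, since $\int_{S^1}\eta=2\pi$. This yields $2\pi\int_N(\frac12h^{1/2}\scal(g_\omega)-\frac18h^{-1}|d\eta|^2+\text{derivatives of }h)\,dV_{g_\omega}$. The $|d\eta|^2$ coefficient checks out: $\frac12\cdot\frac14\cdot h^{-1}\cdot h^{-1}\cdot h=\frac18h^{-1}$. I expect the main obstacle to be the bookkeeping of the $h$-derivative terms from the conformal factor and the fibre-length term. The claim is that these terms collapse to the single divergence $-\frac32\Delta_{g_\omega}(h^{1/2})$, with no leftover $|dh|^2$ term. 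To verify this I would write everything in terms of $u=\frac14\log h$ and check that the gradient-squared terms cancel. If instead a residual term $c\,h^{-3/2}|dh|^2$ appears, I would recheck the conventions against the paper's sign of $\Delta$.

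Finally, when $N$ is closed, the divergence theorem gives $\int_N\Delta_{g_\omega}(h^{1/2})\,dV_{g_\omega}=0$, and \eqref{eq.GHfunctional} follows.
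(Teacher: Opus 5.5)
Your proposal is correct and follows essentially the paper's route. You reduce the integrand to $\frac12\scal(g_\varphi)$, a step the paper uses silently and you make explicit via $\tr T=0$, $|T|^2=\frac12|\tau_2|^2$ and $\scal=-\frac12|\tau_2|^2$; you compute $\scal(g_\varphi)$ by O'Neill plus a conformal change, multiply by $h$, integrate over the fibres and drop the divergence. The paper uses your first ordering (O'Neill for $g_\omega+h^{-3/2}\eta^2$, then the $7$-dimensional conformal factor $h^{1/2}$), but your alternative ordering also closes, giving $\scal(g_\varphi)=h^{-1/2}\bigl(\scal(g_\omega)-\frac32\Delta_{g_\omega}\log h-\frac34|d\log h|^2_{g_\omega}\bigr)-\frac14h^{-2}|d\eta|^2_{g_\omega}$, so the gradient terms combine exactly into $-3h^{-1}\Delta_{g_\omega}(h^{1/2})$ with no residual $|dh|^2$ term.
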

\begin{proof}
From the ansatz \eqref{Eq.G2-ansatz}, the associated metric is $g_\varphi=h^{-1}\eta^2+h^{1/2}g_\omega = h^{1/2}\widetilde g$, where $\widetilde g=g_\omega+f^2\eta^2$ with $f:=h^{-3/4}$. We will use O'Neill formulas of Riemannian submersions \cite[Chapter 11]{Besse2008} to concluding that the scalar curvature is $\scal(\widetilde g) = \scal(g_\omega) -\frac14 f^2|d\eta|_{g_\omega}^2 -2f^{-1}\Delta_{g_\omega}f$. Using $f=e^{-\frac34\log h}$ and the identity $f^{-1}\Delta f = \Delta(\log f) + |d\log f|^2$, we find
\[
\scal(\widetilde g) = \scal(g_\omega) -\frac14 h^{-3/2}|d\eta|_{g_\omega}^2 +\frac32\Delta_{g_\omega}(\log h) -\frac98|d\log h|_{g_\omega}^2.
\]
Applying the conformal change $g_\varphi=e^{2u}\widetilde g$ with $u=\frac14\log h$, we have $$\scal(g_\varphi) = h^{-1/2} ( \scal(\widetilde g) -3\Delta_{\widetilde g}(\log h) -\frac{15}{8}|d\log h|_{\widetilde g}^2 )$$
Since $h$ is basic, we use $\Delta_{\widetilde g}(\log h) = \Delta_{g_\omega}(\log h) -\frac34|d\log h|_{g_\omega}^2$ and $|d\log h|_{\widetilde g}^2 = |d\log h|_{g_\omega}^2$ to simplify the expression to
\[
\scal(g_\varphi) = h^{-1/2}\scal(g_\omega) -\frac32 h^{-1/2}\Delta_{g_\omega}(\log h) -\frac34 h^{-1/2}|d\log h|_{g_\omega}^2 -\frac14 h^{-2}|d\eta|_{g_\omega}^2.
\]
Using $\Delta_{g_\omega}(h^{1/2}) = h^{1/2} ( \frac12\Delta_{g_\omega}(\log h) + \frac14|d\log h|_{g_\omega}^2 )$ and $dV_{g_\varphi}=h\,\eta\wedge dV_{g_\omega}$, it becomes $\scal(g_\varphi)dV_{g_\varphi} = ( h^{1/2}\scal(g_\omega) -3\Delta_{g_\omega}(h^{1/2}) -\frac14 h^{-1}|d\eta|_{g_\omega}^2 ) \eta\wedge dV_{g_\omega}$. Integrating over the $\mathbb S^1$--fibers (volume $2\pi$), the reduced functional is
\[
\mathcal{F}(\varphi) = 2\pi\int_N \left( \frac12 h^{1/2}\scal(g_\omega) -\frac32\Delta_{g_\omega}(h^{1/2}) -\frac18 h^{-1}|d\eta|_{g_\omega}^2 \right)dV_{g_\omega}.
\]
Finally, if $N$ is compact without boundary, the term $\int_N\Delta_{g_\omega}(h^{1/2})\,dV_{g_\omega}$ vanishes, and we conclude the proof.
\end{proof}

We consider the negative gradient flow of $\mathcal F$ because the gradient is the direction of steepest increase of the functional. 

\begin{theorem}\label{Th.gradientGH}
Let $\mathcal F$ be the $G_2$--Hilbert functional \eqref{eq.GHfunctional} associated with the Gibbons--Hawking-type ansatz \eqref{Eq.GHansatz}, where $g=g_\omega$ and $F=d\eta$. Then the formal negative $L^2$--gradient flow of $\mathcal F$ is
\[
\left\{
\begin{aligned}
\partial_t g
&=
\frac12 h^{1/2}\Big(\Ric-\frac12\scal\, g\Big)
-\frac12\Big(\text{Hess}(h^{1/2})-(\Delta h^{1/2})\,g\Big)
-\frac18 h^{-1}\Big(j(F)-\frac12 |F|^2 g\Big),
\\[1ex]
\partial_t h
&=
-\frac14 h^{-1/2}\scal(g)
-\frac18 h^{-2}|F|^2,
\\[1ex]
\partial_t \eta
&=
\frac14\, d^*(h^{-1}F),
\end{aligned}
\right.
\]
where $j(F)_{ij}=F_{ik}F_j{}^k.
$
\end{theorem}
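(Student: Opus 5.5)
Our plan is to compute the first variation of the reduced functional \eqref{eq.GHfunctional} with respect to independent variations $k=\dot g$, $\dot h$ and $\dot\eta=a$ (a basic $1$-form, so $\dot F=da$). We then read off the formal $L^2$-gradient using the pairing $\int_N(\langle k_1,k_2\rangle+\dot h_1\dot h_2+\langle a_1,a_2\rangle)\,dV_g$, in analogy with \eqref{Eq.MetricL2}, and reverse the sign to obtain the flow. We treat $\mathcal F(g,h,\eta)=2\pi\int_N\big(\tfrac12 h^{1/2}\scal(g)-\tfrac18 h^{-1}|F|_g^2\big)dV_g$ as a functional of three unconstrained fields. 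The closedness constraints of the Gibbons--Hawking ansatz are ignored; this is what ``formal'' means here. The overall normalising constant $2\pi$ (and any factor from how the pairing is normalised) will be absorbed into the time parameter, so we only need the proportionality of each component.

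\emph{The $h$- and $\eta$-variations.} These are straightforward. Differentiating in $h$ gives
\[
\int_N\Big(\tfrac14 h^{-1/2}\scal+\tfrac18 h^{-2}|F|^2\Big)\dot h\,dV_g,
\]
so $\partial_t h=-\big(\tfrac14 h^{-1/2}\scal+\tfrac18 h^{-2}|F|^2\big)$, as claimed. For $\eta$ we have $\tfrac{d}{dt}|F|^2=2\langle F,da\rangle$. Integrating by parts on the closed manifold $N$ gives
\[
-\tfrac14\int_N h^{-1}\langle F,da\rangle\,dV_g=-\tfrac14\int_N\langle d^*(h^{-1}F),a\rangle\,dV_g,
\]
so the negative gradient is $\tfrac14 d^*(h^{-1}F)$.

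\emph{The metric variation.} This is the main obstacle, and we carry it out term by term. We use the standard formulas $\dot{dV}=\tfrac12\tr_g k\,dV$ and $\dot\scal=-\langle\Ric,k\rangle+\nabla^i\nabla^j k_{ij}-\Delta\tr_g k$, with the sign convention of $\Delta$ matching the one used in the previous theorem. For the weighted term, integration by parts moves the derivatives onto $h^{1/2}$:
\[
\int_N h^{1/2}\big(\nabla^i\nabla^jk_{ij}-\Delta\tr k\big)=\int_N\big\langle \mathrm{Hess}(h^{1/2})-(\Delta h^{1/2})g,\;k\big\rangle.
\]
Combining this with $-\langle\Ric,k\rangle+\tfrac12\scal\,\tr k$, the $\tfrac12h^{1/2}\scal$ term contributes the pairing of $k$ with
\[
-\tfrac12h^{1/2}\big(\Ric-\tfrac12\scal\,g\big)+\tfrac12\big(\mathrm{Hess}(h^{1/2})-(\Delta h^{1/2})g\big).
\]
For the curvature term we use $\tfrac{d}{dt}|F|_g^2=-\langle j(F),k\rangle$ with $j(F)_{ij}=F_{ik}F_j{}^k$, so $-\tfrac18h^{-1}|F|^2dV$ contributes the pairing of $k$ with $\tfrac18h^{-1}\big(j(F)-\tfrac12|F|^2g\big)$.

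Negating the total gradient gives the stated $\partial_t g$ equation. The delicate points are the sign of the Laplacian (the Hess/$\Delta$ combination must match the convention in the previous theorem) and the normalisation of $\langle j(F),k\rangle$, where the factor from $|F|^2=\tfrac12F_{ij}F^{ij}$ fixes the coefficient $\tfrac18$. We will check both against a simple case, namely constant $h$ with $F=0$, where the flow should reduce to a multiple of the Einstein tensor.
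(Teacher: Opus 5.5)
Your proposal is correct and follows the paper's proof. Like the paper, you drop the factor $2\pi$, vary $(g,h,\eta)$ independently, use $\delta\,dV_g=\frac12\tr_g(k)\,dV_g$, the standard variation of $\scal(g)$ integrated by parts twice onto $h^{1/2}$, $\delta_g|F|^2=-\langle j(F),k\rangle$, and $\int_N h^{-1}\langle F,d\alpha\rangle\,dV_g=\int_N\langle d^*(h^{-1}F),\alpha\rangle\,dV_g$, and then negate the $L^2$-gradient. One caveat: your sanity check with constant $h$ and $F=0$ would not test the Hessian sign or the $j(F)$ normalisation you flag as delicate, though your derivation of both is already correct with $\Delta=\tr_g\mathrm{Hess}$ and $|F|^2=\frac12F_{ij}F^{ij}$.
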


\begin{proof}
By \eqref{eq.GHfunctional}, on a compact quotient $N$ the reduced functional is
\[
\mathcal F(g,h,\eta)
=
2\pi\int_N
\left(
\frac12 h^{1/2}\scal(g)
-\frac18 h^{-1}|F|^2
\right)dV_g,
\qquad
F=d\eta.
\]
Since the factor $2\pi$ only rescales the time parameter of the gradient flow, we compute the formal gradient for the functional without this global factor. Let
$\dot g=k,
$, $
\dot h=v,
$ and $
\dot\eta=\alpha.
$
Then $\dot F=d\alpha.
$ Let $u=h^{1/2}$ and  evolving the term using the tangent space of the metric $g$, we have
\[
\int_N \frac12 u\,\scal(g)\,dV_g.
\]
Using the standard variation formulas
\[
\delta dV_g=\frac12\tr_g(k)dV_g,
\]
and
\[
\delta\scal(g)
=
-\langle \Ric,k\rangle
+
\operatorname{div}\operatorname{div}k
-
\Delta(\tr_g k),
\]
and integrating by parts, we obtain
\begin{equation}
\delta\left(
\int_N \frac12 u\,\scal(g)\,dV_g
\right)
=
\int_N
\Big\langle
-\frac12 u\,\Ric
+\frac12\text{Hess} u
-\frac12(\Delta u)g
+\frac14 u\,\scal(g)g,
k
\Big\rangle dV_g 
+
\int_N
\frac14 h^{-1/2}\scal(g)\,v\,dV_g.
\end{equation}
Now we evolve in the direction of the function $h$.
$\dot u=\frac12 h^{-1/2}v$ and we vary the curvature term
\[
-\frac18\int_N h^{-1}|F|^2\,dV_g.
\]
The metric variation of the norm of a $2$--form gives
\[
\delta_g |F|^2=-\langle j(F),k\rangle,
\qquad
j(F)_{ij}=F_{ik}F_j{}^k.
\]
Therefore the metric variation of this term is
\[
\int_N
\left\langle
\frac18 h^{-1}j(F)
-\frac1{16}h^{-1}|F|^2g,
k
\right\rangle dV_g.
\]
The variation with respect to $h$ is
\[
\frac18\int_N h^{-2}|F|^2v\,dV_g.
\]
Finally, the variation with respect to $\eta$ is
\[
-\frac14\int_N h^{-1}\langle F,d\alpha\rangle dV_g.
\]
Integrating by parts gives
\[
-\frac14\int_N h^{-1}\langle F,d\alpha\rangle dV_g
=
-\frac14\int_N
\langle d^*(h^{-1}F),\alpha\rangle dV_g.
\]
Combining these computations, the first variation is
\[
\mathcal G_g
=
-\tfrac12 h^{1/2}\Ric
+\tfrac12\text{Hess}(h^{1/2})
-\tfrac12(\Delta h^{1/2})g
+\tfrac14 h^{1/2}\scal(g)g
+\tfrac18 h^{-1}j(F)
-\tfrac1{16}h^{-1}|F|^2g.
\]
Therefore, we have
\begin{align*}
\delta\mathcal F
={}&
\int_N \langle \mathcal G_g,k\rangle\,dV_g +
\int_N
\left(
\tfrac14 h^{-1/2}\scal(g)
+\tfrac18 h^{-2}|F|^2
\right)v\,dV_g -
\tfrac14\int_N
\langle d^*(h^{-1}F),\alpha\rangle\,dV_g.
\end{align*}
Thus, with respect to the natural $L^2$--pairing in the variables
$(g,h,\eta),
$ the formal gradient is
\[
\nabla_g\mathcal F
=
-\frac12 h^{1/2}\Ric
+\frac12\text{Hess}(h^{1/2})
-\frac12(\Delta h^{1/2})g
+\frac14 h^{1/2}\scal(g)g
+\frac18h^{-1}j(F)
-\frac1{16}h^{-1}|F|^2g,
\]
\[
\nabla_h\mathcal F
=
\frac14 h^{-1/2}\scal(g)
+\frac18h^{-2}|F|^2,
\]
and
\[
\nabla_\eta\mathcal F
=
-\frac14 d^*(h^{-1}F).
\]
This proves the theorem.
\end{proof}

\begin{remark}
 The tensor $\text{Ric} - \frac{1}{2}\text{scal}\,g$ is the classical Einstein tensor. As the variational gradient of the Einstein-Hilbert action, it is a central object in Riemannian geometry. Therefore, in this restricted flat-fibration case, the metric evolution completely decouples from the connection and is governed entirely by a time-dependent multiple of the Einstein tensor, shifting the analytical focus to classical Einstein-type equations.
\end{remark}
\begin{cor}\label{cor:flow_consequences}
Let $(g(t),h(t),\eta(t))$ be a solution to the $L^2$--gradient flow defined in Theorem \ref{Th.gradientGH}. Assume the base manifold $N$ is closed and has dimension $6$. Then the flow exhibits the following properties:
\begin{enumerate}
    \item The de Rham cohomology class $[F]\in H^2_{\mathrm{dR}}(N)$ of the curvature $2$--form $F=d\eta$ is preserved along the flow.

    \item If the flow admits a stationary limit, the connection $\eta$ satisfies
    $$
    d^*(h^{-1}F)=0,
    $$
    and the scalar curvature of the base manifold satisfies
    $$
    \scal(g)=-\frac12 h^{-3/2}|F|^2\leq 0.
    $$

    \item The evolution of the volume form $d\vol_g$ of the base manifold $N$ is given by
    $$
    \partial_t(d\vol_g)
    =
    \left(
    -\frac12 h^{1/2}\scal(g)
    +
    \frac54\Delta(h^{1/2})
    +
    \frac18 h^{-1}|F|^2
    \right)d\vol_g.
    $$
\end{enumerate}
\end{cor}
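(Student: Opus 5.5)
We prove the three items separately, using only the flow equations of Theorem \ref{Th.gradientGH}: the evolution of $\eta$, of $h$, and the trace of the metric evolution.

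\emph{Item (1).} The flow gives $\partial_t\eta=\frac14 d^*(h^{-1}F)$, which is a basic $1$-form on $N$. Since $F=d\eta$ and $d$ commutes with $\partial_t$,
\[
\partial_t F=d(\partial_t\eta)=\tfrac14\,d\,d^*(h^{-1}F),
\]
which is exact at every time. Integrating in $t$, $F(t)-F(0)=d\big(\tfrac14\int_0^t d^*(h^{-1}F)\,ds\big)$, so $[F(t)]=[F(0)]$ in $H^2_{\mathrm{dR}}(N)$. If instead $\eta$ is viewed as a connection on a fixed bundle, the same computation shows that $F$ changes only by the exact form $d(\partial_t\eta)$, so the Chern class is preserved.

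\emph{Item (2).} A stationary limit means all three right-hand sides vanish. From $\partial_t\eta=0$ we get $d^*(h^{-1}F)=0$. From $\partial_t h=0$,
\[
-\tfrac14 h^{-1/2}\scal(g)-\tfrac18 h^{-2}|F|^2=0.
\]
Multiplying by $-4h^{1/2}$ gives $\scal(g)=-\tfrac12 h^{-3/2}|F|^2$, which is $\le 0$ because $h>0$. The metric equation is not needed here.

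\emph{Item (3).} We use $\partial_t(d\vol_g)=\tfrac12\tr_g(\partial_t g)\,d\vol_g$ with $\dim N=6$, and trace each group of terms in the metric equation.
\begin{itemize}
\item Einstein-tensor term: $\tr(\Ric-\tfrac12\scal\,g)=\scal-3\scal=-2\scal$. With the prefactor $\tfrac12 h^{1/2}$ this contributes $-h^{1/2}\scal$.
\item Hessian term: $\tr\big(\mathrm{Hess}(h^{1/2})-(\Delta h^{1/2})g\big)$. Taking $\Delta=\tr\,\mathrm{Hess}$, this is $\Delta h^{1/2}-6\Delta h^{1/2}=-5\Delta h^{1/2}$. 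With the prefactor $-\tfrac12$ it contributes $+\tfrac52\Delta h^{1/2}$.
\item Curvature term: $\tr j(F)=|F|^2$ in the convention where $F_{ik}F_i{}^k$ summed equals $|F|^2$. Then $\tr\big(j(F)-\tfrac12|F|^2g\big)=|F|^2-3|F|^2=-2|F|^2$, and with the prefactor $-\tfrac18h^{-1}$ this contributes $+\tfrac14 h^{-1}|F|^2$.
\end{itemize}
Halving the total trace gives the stated coefficients $-\tfrac12 h^{1/2}\scal$, $+\tfrac54\Delta h^{1/2}$ and $+\tfrac18h^{-1}|F|^2$.

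The main obstacle is consistency of conventions, not any conceptual difficulty. Two choices must match those of the proof of Theorem \ref{Th.gradientGH}.
\begin{itemize}
\item The sign of $\Delta$: the trace computation uses $\Delta=\tr\,\mathrm{Hess}$.
\item The normalization of $|F|^2$ relative to $\tr j(F)$: if $|F|^2=\tfrac12F_{ik}F^{ik}$, the curvature coefficient changes.
\end{itemize}
I would therefore fix $\tr j(F)=|F|^2$ and $\Delta=\tr\,\mathrm{Hess}$, the conventions implicit in $\delta_g|F|^2=-\langle j(F),k\rangle$ as used in that proof, and check that these give exactly the coefficients $\tfrac54$ and $\tfrac18$.
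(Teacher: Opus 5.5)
Your proof is correct and follows the paper's argument step for step: exactness of $\partial_tF=\tfrac14\,dd^*(h^{-1}F)$, reading off the stationary equations for $\eta$ and $h$, and halving the trace of the metric equation with $\tr_g j(F)=|F|^2$ and $\Delta=\tr\,\mathrm{Hess}$. One correction to your closing remark: $\delta_g|F|^2=-\langle j(F),k\rangle$ is the variation formula for the form norm $|F|^2=\tfrac12F_{ik}F^{ik}$, for which $\tr_g j(F)=2|F|^2$ and the last coefficient would become $\tfrac1{16}$, so the two conventions you call compatible are not; the $\tfrac18$ in item (3) requires the tensor norm $|F|^2=F_{ik}F^{ik}$ (the reading that matches the $\tfrac14$ in the O'Neill formula behind the functional), which the paper also uses tacitly, and the mismatch lies in the derivation of Theorem \ref{Th.gradientGH}, not in your trace computation.
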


\begin{proof}
\begin{enumerate}
\item Taking the exterior derivative of the third equation in the flow gives
    $$
    \partial_t F
    =
    d(\partial_t\eta)
    =
    \frac14 d\left(d^*(h^{-1}F)\right).
    $$
    Since the right-hand side is exact, the cohomology class $[F]$ remains constant along the flow.

    \item At a stationary point, all time derivatives vanish. From $\partial_t\eta=0$, we obtain
    $$
    d^*(h^{-1}F)=0.
    $$
    From $\partial_t h=0$, using the second equation of Theorem \ref{Th.gradientGH}, we get
    $$
    -\frac14 h^{-1/2}\scal(g)-\frac18h^{-2}|F|^2=0.
    $$
    Hence
    $$
    \scal(g)
    =
    -\frac12 h^{-3/2}|F|^2.
    $$
    Since $h>0$ and $|F|^2\geq0$, it follows that
    $$
    \scal(g)\leq0.
    $$

    \item For a time-dependent metric $g(t)$, the variation of the Riemannian volume form is
    $$
    \partial_t(d\vol_g)
    =
    \frac12\tr_g(\partial_t g)\,d\vol_g.
    $$
    We compute the trace of the metric evolution equation. In dimension $6$,
    $$
    \tr_g(g)=6,
    \qquad
    \tr_g(\Ric)=\scal(g),
    \qquad
    \tr_g(\text{Hess}(h^{1/2}))=\Delta(h^{1/2}),
    $$
    and
    $$
    \tr_g(j(F))=|F|^2.
    $$
    Therefore,
    \begin{align*}
    \tr_g(\partial_t g)
 =
    -h^{1/2}\scal(g)
    +
    \frac52\Delta(h^{1/2})
    +
    \frac14 h^{-1}|F|^2.
    \end{align*}
\end{enumerate}
\end{proof}
\begin{prop}[Rigidity of the unnormalized flow]\label{prop.rig.gh}
Let $N$ be a closed $6$--dimensional manifold. The only stationary solutions to the unnormalized $L^2$--gradient flow of $\widetilde{\mathcal F}$ are trivial configurations where:
\begin{enumerate}
    \item The principal $\mathbb S^1$--bundle is flat, that is, $F\equiv 0$.
    \item The base manifold is scalar-flat, that is, $\scal(g)=0$.
    \item The fiber-length function is constant, that is, $h=\mathrm{constant}$.
\end{enumerate}
\end{prop}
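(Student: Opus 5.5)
The plan is to take a stationary point, use the $h$--equation to eliminate $\scal(g)$ from the trace of the metric equation, and then integrate over the closed base $N$, where the Laplacian term drops out. No compactness or long-time analysis is needed; the argument uses only the three stationary equations of Theorem~\ref{Th.gradientGH} and integration by parts on $N$.

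\textbf{Step 1 (the $h$-- and $\eta$--equations).} Suppose $(g,h,\eta)$ is stationary, so $\partial_t g=0$, $\partial_t h=0$ and $\partial_t\eta=0$. By Corollary~\ref{cor:flow_consequences}(2) we then have $d^*(h^{-1}F)=0$ and
$$
\scal(g)=-\tfrac12 h^{-3/2}|F|^2 .
$$

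\textbf{Step 2 (trace of the metric equation).} Since $\partial_t g=0$, its trace vanishes. This is the trace computed in the proof of Corollary~\ref{cor:flow_consequences}(3):
$$
0=-h^{1/2}\scal(g)+\tfrac52\Delta(h^{1/2})+\tfrac14 h^{-1}|F|^2 .
$$
Substituting the expression for $\scal(g)$ from Step 1 gives $-h^{1/2}\scal(g)=\tfrac12 h^{-1}|F|^2$. Hence
$$
\tfrac52\,\Delta(h^{1/2})=-\tfrac34\,h^{-1}|F|^2 .
$$

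\textbf{Step 3 (integration over $N$).} Integrate this identity over the closed manifold $N$. The left-hand side integrates to zero by the divergence theorem, whatever sign convention is used for $\Delta$. Therefore $\int_N h^{-1}|F|^2\,dV_g=0$. Since $h>0$, this forces $F\equiv 0$, which is item (1).

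\textbf{Step 4 (constancy of $h$ and scalar-flatness).} With $F\equiv 0$, Step 2 reduces to $\Delta(h^{1/2})=0$. On a closed connected manifold a harmonic function is constant; one sees this by multiplying by $h^{1/2}$ and integrating by parts to get $\int_N|d h^{1/2}|^2\,dV_g=0$. So $h$ is constant, which is item (3). Finally, Step 1 with $F=0$ gives $\scal(g)=0$, which is item (2).

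As a consistency check, with $h$ constant and $F=0$ the full (trace-free) metric equation reduces to $\Ric-\tfrac12\scal\,g=0$, i.e.\ $g$ is Ricci-flat. This is compatible with, and in fact slightly stronger than, the stated conclusion.

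\textbf{Main obstacle.} The only delicate point is the bookkeeping of coefficients in the trace of the metric equation. The argument needs the $|F|^2$ terms to combine with a definite sign once $\scal(g)$ is substituted. With the coefficients $-h^{1/2}$, $\tfrac52$ and $\tfrac14$ recorded in Corollary~\ref{cor:flow_consequences}(3), both contributions are positive multiples of $h^{-1}|F|^2$, namely $\tfrac12+\tfrac14=\tfrac34$. I would re-verify this trace directly from Theorem~\ref{Th.gradientGH} before relying on it. Should the signs turn out differently, the fallback is to pair the $\eta$--equation $d^*(h^{-1}F)=0$ with $\eta$, using the integration by parts $\int_N\langle d^*(h^{-1}F),\eta\rangle\,dV_g=\int_N h^{-1}|F|^2\,dV_g$. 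This requires $\eta$ to be globally a basic form, which holds when the bundle is trivial.
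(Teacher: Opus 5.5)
Your proof is correct and follows the paper's argument. The paper works with the half-trace from Corollary~\ref{cor:flow_consequences}(3), substitutes $\scal(g)=-\tfrac12 h^{-3/2}|F|^2$ to get $\tfrac38 h^{-1}|F|^2+\tfrac54\Delta(h^{1/2})=0$ (your identity divided by two), and integrates to conclude $F\equiv0$, $\scal(g)=0$ and $h$ constant. On the coefficient you flagged: if one norm convention is used consistently in both $\delta_g|F|^2$ and $\tr_g j(F)$, the $h^{-1}|F|^2$ term in the trace is $\tfrac18$ rather than $\tfrac14$; it is still positive, so your argument goes through unchanged.
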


\begin{proof}
Assume that the flow has reached a stationary limit, meaning
$$
\partial_t g=0,
\qquad
\partial_t h=0,
\qquad
\partial_t\eta=0.
$$

Since the metric is stationary, the evolution of the Riemannian volume form must vanish:
$$
\partial_t(d\vol_g)=0.
$$
By Corollary \ref{cor:flow_consequences}(3), this gives
$$
0
=
-\frac12 h^{1/2}\scal(g)
+
\frac54\Delta(h^{1/2})
+
\frac18 h^{-1}|F|^2.
$$
Moreover, the stationary condition for the function $h$, namely $\partial_t h=0$, gives
$$
\scal(g)
=
-\frac12 h^{-3/2}|F|^2.
$$
Substituting this into the volume trace equation, we obtain
\begin{align*}
0
&=
-\frac12 h^{1/2}
\left(
-\frac12 h^{-3/2}|F|^2
\right)
+
\frac54\Delta(h^{1/2})
+
\frac18h^{-1}|F|^2
\\
&=
\frac14 h^{-1}|F|^2
+
\frac54\Delta(h^{1/2})
+
\frac18h^{-1}|F|^2
\\
&=
\frac38 h^{-1}|F|^2
+
\frac54\Delta(h^{1/2}).
\end{align*}
Integrating over the closed manifold $N$ gives
$$
\int_N
\left(
\frac38 h^{-1}|F|^2
+
\frac54\Delta(h^{1/2})
\right)d\vol_g
=
0.
$$
Since $N$ is closed,
$$
\int_N \Delta(h^{1/2})\,d\vol_g=0.
$$
Hence
$$
\frac38
\int_N h^{-1}|F|^2\,d\vol_g
=
0.
$$
Because $h>0$ and $|F|^2\geq0$, it follows that
$F\equiv0.
$ Substituting $F=0$ into the stationary constraint for $h$, we get
$$
\scal(g)=0.
$$
The volume trace equation then reduces to
$$
\Delta(h^{1/2})=0.
$$
Since $N$ is closed, $h^{1/2}$ is constant. Therefore $h$ is constant. Thus, any stationary solution of the unnormalized flow has flat connection, scalar-flat base metric, and constant fiber length.
\end{proof}

\bibliographystyle{plain}
\bibliography{Bibliografia-2020-07}

\end{document}